\documentclass[12pt,leqno,oneside]{amsart}
\usepackage{mathrsfs,dsfont}
\usepackage{amsmath,amstext,amsthm,amssymb,bbm}
\usepackage{charter}
\usepackage{typearea}
\usepackage{hyperref}
\usepackage{color}

\usepackage{float}
\usepackage{tikz}
\usetikzlibrary{patterns}
\usepackage{graphicx}
\usepackage{hyperref}

\usepackage[width=6.4in,height=8.5in]
{geometry}

\def\bt{\begin{thm}}
	\def\et{\end{thm}}
\def\bl{\begin{lem}}
	\def\el{\end{lem}}
\def\bd{\begin{defn}}
	\def\ed{\end{defn}}
\def\bc{\begin{cor}}
	\def\ec{\end{cor}}
\def\bp{\begin{proof}}
	\def\ep{\end{proof}}
\def\br{\begin{rem}}
	\def\er{\end{rem}}

\newtheorem{thm}{Theorem}[section]
\newtheorem{prop}[thm]{Proposition}
\newtheorem{lem}[thm]{Lemma}
\newtheorem{defn}[thm]{Definition}
\newtheorem{example}[thm]{Example}
\newtheorem{rem}[thm]{Remark}
\newtheorem{cor}[thm]{Corollary}

\numberwithin{equation}{section}

\title{Non-null ruled surfaces and pseudo hyper-dual tangent sphere bundles }

\author{ Emel Karaca}
\address{Polatl{\i} Faculty of Science and Arts, Ankara Hac{\i} Bayram Veli University, Ankara, 06900 Turkey}
\email{emel.karaca@hbv.edu.tr}

\author{Senem ÖZTÜRK}
\address{Polatl{\i} Faculty of Science and Arts, Ankara Hac{\i} Bayram Veli University, Ankara, 06900 Turkey}
\email{senemm2000.06@gmail.com}

\subjclass{} 

\date{\today}
\newcommand\blfootnote[1]{ \begingroup \renewcommand\thefootnote{} \footnotetext{#1} \addtocounter{footnote}{-1} \endgroup}
\begin{document}
\blfootnote{This work was supported by TÜBİTAK under Grant No. 125F475 (ARDEB 3501 Career Development Program).}	
\begin{abstract}
In this paper, the tangent bundles of hyper-dual pseudo spheres are defined. Then, a new family of non-null ruled surfaces associated with hyper-dual Lorentzian curves is defined via the E.Study hyper-map in Minkowski space and the definitions of the tangent bundles of hyper-dual pseudo spheres. Within this framework, the resulting non-null ruled surfaces are classified. Necessary and sufficient conditions for developability are obtained with their geometric interpretations in terms of hyper-dual Lorentzian curves and arc-length parameters. Moreover, explicit examples  are presented to verify the theoretical results.  \\
\textbf{Keywords:} E. Study hyper-map in Minkowski space, developability condition, non-null ruled surface. \\
\textbf{MSC(2010):} 53A04, 53A17, 55R25.
	\end{abstract}

	\maketitle

	\section{Introduction}

Ruled surfaces constitute one of the fundamental classes of surfaces in differential geometry owing to their intricate geometric structure and numerous applications in kinematics, computer-aided geometric design, robotics, and mathematical physics, etc. Beyond their classical importance, ruled surfaces have become an active research topic in Lorentzian geometry in \cite{lopez}.

The geometric interpretation of ruled surfaces is closely related to the classical E. Study mapping, which constructs a one-to-one relation between directed lines and points on dual unit spheres. This mapping provides an elegant geometric interpretation of ruled surfaces in terms of spherical curves and has become one of the principal tools in line geometry. The pioneering works of Study in \cite{study}, Blaschke in \cite{blas}, Struik in \cite{struik}, and O'Neill in \cite{neil} established the theoretical foundations of this correspondence and its applications to Euclidean, Lorentzian, and semi-Riemannian geometries.
Recent developments in differential geometry have also strengthened this connection through the study of tangent bundles and tangent sphere bundles. The geometry of tangent bundles over unit spheres and pseudo-spheres has revealed important relationships between spherical curves, natural lift curves, and ruled surfaces. In particular, the authors \cite{bekar}, \cite{hathout}, \cite{gen} established several fundamental results concerning tangent sphere bundles in Euclidean and Lorentzian geometries. 

Motivated by these studies, further investigations have focused on various classes of ruled surfaces and their associated tangent bundle structures. Karaca and Çalışkan in \cite{karaca, karaca2, caliskan, kar2} studied the geometry of slant ruled surfaces and their natural lift curves, revealing new geometric characterizations within tangent bundle settings. From a complementary viewpoint, the classical dual geometric formulation of ruled surfaces was further developed by Karakaş, Gündoğan, and co-authors in \cite{karakas}, while Uğurlu and Çalışkan \cite{ug} established the E. Study correspondence for dual Lorentzian and hyperbolic unit spheres. These contributions considerably expanded the applicability of E. Study geometry to Lorentzian line geometry and provide a natural foundation for subsequent generalizations based on higher-order dual structures.
The emergence of hyper-dual numbers has considerably broadened the scope of classical dual geometry by providing a higher-order algebraic framework capable of representing more sophisticated geometric and kinematic structures. Originally introduced as an extension of dual numbers, hyper-dual numbers have found important applications in automatic differentiation, rigid body kinematics, and computational mechanics \cite{cockle,cohen,cohen2,cohen3,fike,fike2}. Their algebraic properties have subsequently motivated the development of hyper-dual vectors, hyper-dual quaternions, and hyper-dual geometric models suitable for studying differential geometric objects in higher-order settings in \cite{bekar2}.

Moreover, in \cite{aslan} and \cite{aslan2}, the authors  investigated hyper-dual curves and the ruled surfaces generated by them, demonstrating that hyper-dual structures naturally extend the classical dual geometric framework. More recently, Derkaoui, Hathout, Bekar, and Yaylı \cite{bekar} established an important relationship between ruled surfaces and hyper-dual tangent sphere bundles, thereby providing a new perspective on the interaction between tangent bundle geometry and higher-order dual structures. Related studies by Karaca \cite{kar} further explored hyper-dual curves, their associated striction curves, and ruled surfaces. Collectively, these studies demonstrate the effectiveness of hyper-dual geometry in extending classical differential geometric constructions.
Despite these significant developments, a comprehensive geometric framework connecting pseudo hyper-dual tangent sphere bundles with the theory of non-null ruled surfaces in the hyper-dual Lorentzian setting has remained incomplete. In particular, the characterization of non-null ruled surfaces through the E. Study hyper-map, their systematic classification according to causal character, and the geometric interpretation of their developability conditions within the pseudo hyper-dual framework have not been investigated in a unified manner. Establishing such a framework is important not only for extending the classical E. Study correspondence but also for providing new geometric tools for the analysis of ruled surfaces associated with higher-order dual structures.

Motivated by this observation, the present paper develops a unified geometric formulation for non-null ruled surfaces associated with pseudo hyper-dual tangent sphere bundles. First, an E.~Study correspondence is established for unit hyper-dual vectors \cite{gok}, leading to a natural construction of non-null ruled surfaces generated by hyper-dual Lorentzian curves. Subsequently, these ruled surfaces are classified, and necessary and sufficient developability conditions together with their geometric interpretations are obtained. Furthermore, several structural properties of the resulting ruled surfaces, common base curves, and orthogonal ruling directions, are investigated. The theoretical results are finally illustrated by explicit examples, confirming the proposed geometric framework. \\

The paper is structured as follows. Section~2 provides the fundamental background on dual numbers, Lorentzian geometry, and the tangent bundles of pseudo-spheres. Section 3 introduces hyper-dual numbers and hyper-dual tangent sphere bundles, and establishes the E.~Study map for unit hyper-dual vectors. In Section~4, the correspondence between pseudo hyper-dual tangent sphere bundles and non-null ruled surfaces is introduced. Within this setting, non-null ruled surfaces are classified, developability conditions are derived and interpreted geometrically, and several structural properties are established. Explicit examples are also presented to illustrate the theoretical results. Finally, Section~5 summarizes the main contributions and discusses the significance of the proposed framework. 

\section{Dual Numbers and tangent bundle of pseudo-sphere}
This section introduces the tangent bundles of pseudo-spheres and explores their relationship with dual numbers. \\ Let \(\mathbb{R}_{1}^{3}\) be the three-dimensional Minkowski space equipped with the Lorentzian inner product defined by
\begin{equation}
\langle , \rangle_{L}=x_{1}y_{1}+x_{2}y_{2}-x_{3}y_{3}
\end{equation}
where $\vec{x}=(x_{1},x_{2},x_{3})$ and $\vec{y}=(y_{1},y_{2},y_{3})$ denote the vectors in $\mathbb{R}_{1}^{3}$. In \(\mathbb{R}_{1}^{3}\), the vector \(\vec{x}\) is characterized as follows: \\
$(i)$ Spacelike, if \(\langle \vec{x},\vec{x}\rangle_{L}>0\) or \(\vec{x}=0\), \\
$(ii)$ Timelike, if \(\langle \vec{x},\vec{x}\rangle_{L}<0\), \\
$(iii)$ Lightlike, if \(\langle \vec{x},\vec{x}\rangle_{L}=0\) and \(\vec{x}\neq 0\). \\
The Lorentzian norm of \(\vec{x}\) is defined by
\[
||\vec{x}||_{L}=\sqrt{|\langle\vec{x},\vec{x}\rangle_{L}|}.
\]
Two vectors \(\vec{x}\) and \(\vec{y}\) are said to be Lorentzian orthogonal whenever \(\langle\vec{x},\vec{y}\rangle_{L}=0\). The Lorentzian vector product of these vectors is expressed as follows:
\begin{equation}
\vec{x} \times_{L} \vec{y}=(x_{2}y_{3}-x_{3}y_{2},x_{3}y_{1}-x_{1}y_{3}, x_{2}y_{1}-x_{1}y_{2} ),
\end{equation}
where $\vec{x}=(x_{1},x_{2},x_{3})$ and $\vec{y}=(y_{1},y_{2},y_{3})$ in $\mathbb{R}_{1}^{3}$, \cite{neil}.
Lorentzian unit sphere and hyperbolic unit sphere are given as follows:
\begin{eqnarray}
\mathbb{S}_{1}^{2}&=&\{\vec{x}=(x_{1},x_{2},x_{3}) \in \mathbb{E}_{1}^{3}: \langle \vec{x},\vec{x} \rangle_{L}=1 \}, \\ 
\mathbb{H}_{0}^{2}&=&\{\vec{x}=(x_{1},x_{2},x_{3}) \in \mathbb{E}_{1}^{3}: \langle \vec{x}, \vec{x} \rangle_{L}=-1\},
\end{eqnarray}
see \cite{neil}. The tangent bundles of $\mathbb{S}_{1}^{2}$ and $\mathbb{H}_{0}^{2}$ are 
\begin{eqnarray}
T\mathbb{S}_{1}^{2}&=&\{(\gamma, \vartheta) \in \mathbb{S}_{1}^{2} \times \mathbb{E}_{1}^{3}: \langle \gamma, \vartheta \rangle_{L}=0 \}, \\ 
T\mathbb{H}_{0}^{2}&=&\{(\gamma, \vartheta) \in \mathbb{H}_{0}^{2} \times \mathbb{E}_{1}^{3}: \langle \gamma, \vartheta \rangle_{L}=0\}.
\end{eqnarray}
The unit tangent bundles of $\mathbb{S}_{1}^{2}$ and $\mathbb{H}_{0}^{2}$ are represented as follows
\begin{eqnarray}
UT\mathbb{S}_{1}^{2}&=&\{(\gamma, \vartheta) \in \mathbb{S}_{1}^{2} \times \mathbb{E}_{1}^{3}: \langle \gamma, \vartheta \rangle_{L}=0, \langle \vartheta, \vartheta \rangle_{L}=\pm 1 \}, \\ 
UT\mathbb{H}_{0}^{2}&=&\{(\gamma, \vartheta) \in \mathbb{H}_{0}^{2} \times \mathbb{E}_{1}^{3}: \langle \gamma, \vartheta \rangle_{L}=0, \langle \vartheta, \vartheta \rangle_{L}= 1\}.
\end{eqnarray}
Let \(I,J\subset\mathbb{R}\) denote two open intervals. Let $\beta$ and $\gamma$ be the base curve and vector field along $\beta$. The ruled surface in $\mathbb{R}_{1}^{3}$ is 
\begin{equation*}
\phi(t,v)=\beta(t)+v\gamma(t), \ \ t \in I \subset \mathbb{R}, \ v \in J \subset \mathbb{R}.
\end{equation*} 
Moreover, $\phi$ is developable iff the following relation holds:
\begin{equation*}
det(\beta^{'}(t), \gamma(t),\gamma^{'}(t))=0.
\end{equation*}
\subsection{The tangent bundle of pseudo-sphere and dual numbers}
The set of dual numbers is defined as
$$D=\{X=x+\varepsilon x^{*}: (x, x^{*}) \in \mathbb{R}\times \mathbb{R}, \ \ \varepsilon^{2}=0\}.$$
Here, $x$ and $x^{*}$ denote real and dual parts of dual number, respectively, see \cite{blas}.
In $\mathbb{R}^{3}$, the combination of $\vec{x}$ and $\vec{x}^{*}$ is referred to as dual vectors. $\vec{X}=\vec{x}+\varepsilon \vec{x}^{*}$ is defined as a dual vector if $\vec{x}$ and $\vec{x}^{*}$ are vectors in $\mathbb{R}^{3}$. The set of dual vectors, $\mathbb{D}^{3},$ is expressed as
\begin{equation*}
\mathbb{D}^{3}=\{ \vec{X}=(X_{1},X_{2},X_{3}):X_{i}=x_{i}+\epsilon x_{i}^{*} \in \mathbb D, 1 \leq i \leq 3 \}.
\end{equation*}
Consider two dual vectors \(\vec{X}=\vec{x}+\varepsilon\vec{x}^{*}\) and \(\vec{Y}=\vec{y}+\varepsilon\vec{y}^{*}\). Their sum, Lorentzian inner product, and Lorentzian vector product are defined, respectively, by
\begin{eqnarray*}
\vec{X}+_{L}\vec{Y}&=& (\vec{x}+\vec{y})+\varepsilon (\vec{x}^{*}+\vec{y}^{*}), \\
\langle \vec{X},\vec{Y} \rangle_{L}&=& \langle \vec{x},\vec{y} \rangle_{L}+ \varepsilon (\langle \vec{x}^{*},\vec{y}\rangle_{L}+\langle \vec{x},\vec{y}^{*}\rangle_{L}),
\end{eqnarray*}
and
$$\vec{X} \times_{L} \vec{Y}= \vec{x}\times_{L} \vec{y}+ \varepsilon(\vec{x} \times_{L} \vec{y}^{*}+\vec{x}^{*}\times_{L} \vec{y}).$$
The Lorentzian norm of $\vec{X}=\vec{x}+\varepsilon \vec{x}^{*}$ is 
\begin{equation}
	\vert \vert \vec{X}\vert \vert_{L}= \sqrt{\vert \langle \vec{x}, \vec{x}\rangle_{L} \vert}+\varepsilon \frac{sgn(\langle \vec{x},\vec{x} \rangle_{L}) \langle \vec{x},\vec{x}^{*}\rangle_{L}}{\sqrt{ \vert \langle \vec{x},\vec{x}\rangle_{L} \vert}}, \ \ \langle \vec{x}, \vec{x}\rangle_{L} \neq 0.
\end{equation}
If $\vert \vert \vec{X}\vert \vert_{L}=1$, $\vec{X}$ is referred as the unit Lorentzian dual vector. Moreover, the dual vector \(\vec{X}=\vec{x}+\varepsilon\vec{x}^{*}\) inherits its causal character from \(\vec{x}\), being spacelike, timelike, or lightlike according to the causal type of its real component, see \cite{lopez}. The space of Lorentzian dual vectors is represented by \(\mathbb{D}_{1}^{3}\). \\ 
The dual hyperbolic unit sphere is defined as the set of all unit dual timelike vectors:
\begin{equation}
H^{2}=\{ \vec{X}=\vec{x}+\varepsilon \vec{x}^{*} \in \mathbb{D}_{1}^{3}: \langle \vec{x},\vec{x}\rangle_{L}=-1, \langle \vec{x},\vec{x}^{*}\rangle_{L}=0 \}
\end{equation}
and the dual Lorentzian unit sphere is defined as the set of all unit dual spacelike vectors:
\begin{equation}
S_{1}^{2}=\{ \vec{X}=\vec{x}+\varepsilon \vec{x}^{*} \in \mathbb{D}_{1}^{3}: \langle \vec{x},\vec{x}\rangle_{L}=1, \langle \vec{x},\vec{x}^{*}\rangle_{L}=0 \}.
\end{equation}
Now, we can give the following isomorphisms:
\begin{eqnarray}\label{212}
H^{2} &\longrightarrow& T\mathbb{H}_{0}^{2}, \\
\vec{X}=\vec{x}+\varepsilon \vec{x}^{*} &\longmapsto& \Gamma(t)=(\vec{x}(t),\vec{x}^{*}(t)) \nonumber
\end{eqnarray}
and 
\begin{eqnarray}\label{213}
S_{1}^{2} &\longrightarrow& T\mathbb{S}_{1}^{2}, \\
	\vec{X}=\vec{x}+\varepsilon \vec{x}^{*} &\longmapsto& \Gamma(t)=(\vec{x}(t),\vec{x}^{*}(t)). \nonumber
\end{eqnarray}
The following theorem provides a geometric interpretation of the above construction.
\begin{thm}[E. Study Map, \cite{ug} ] 
A bijective correspondence exists between the oriented timelike (resp., spacelike) lines in \(\mathbb{R}_{1}^{3}\) and points of the dual hyperbolic unit sphere (resp., dual Lorentzian unit sphere).
\end{thm}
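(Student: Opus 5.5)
The plan is to build the correspondence explicitly with Plücker-type coordinates and then check that it is well defined, injective and surjective.

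\emph{The map.} Let $L$ be an oriented timelike (resp. spacelike) line in $\mathbb{R}_{1}^{3}$. Choose any point $\vec{p}\in L$, and let $\vec{x}$ be the unit direction vector of $L$ that matches its orientation. Then $\langle \vec{x},\vec{x}\rangle_{L}=-1$ (resp. $=1$). Define the moment vector $\vec{x}^{*}=\vec{p}\times_{L}\vec{x}$, and send $L$ to $\vec{X}=\vec{x}+\varepsilon\vec{x}^{*}$.

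\emph{Well-definedness.} Two facts must be checked.
\begin{itemize}
\item $\vec{x}^{*}$ does not depend on the choice of $\vec{p}$. If $\vec{q}=\vec{p}+\lambda\vec{x}$, then $\vec{q}\times_{L}\vec{x}=\vec{p}\times_{L}\vec{x}$, since $\vec{x}\times_{L}\vec{x}=0$ by antisymmetry of the Lorentzian cross product.
\item $\langle \vec{x},\vec{x}^{*}\rangle_{L}=0$. This follows from the identity $\langle \vec{a}\times_{L}\vec{b},\vec{c}\rangle_{L}=\pm\det(\vec{a},\vec{b},\vec{c})$, which is checked directly from the coordinate formula for $\times_{L}$. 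With two equal entries the determinant is $0$.
\end{itemize}
Together with the norm formula for dual vectors, these give $\|\vec{X}\|_{L}=1$. The sign of $\langle\vec{x},\vec{x}\rangle_{L}$ places $\vec{X}$ in $H^{2}$ in the timelike case and in $S_{1}^{2}$ in the spacelike case.

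\emph{Injectivity.} Suppose two lines have the same image. Then they have the same direction $\vec{x}$, which also fixes the orientation, and the same moment. So $(\vec{p}-\vec{q})\times_{L}\vec{x}=0$. Because $\vec{x}$ is non-null, this forces $\vec{p}-\vec{q}$ to be parallel to $\vec{x}$, so the two lines coincide.

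\emph{Surjectivity.} Given $\vec{x}+\varepsilon\vec{x}^{*}$ with $\langle\vec{x},\vec{x}\rangle_{L}=\pm1$ and $\langle\vec{x},\vec{x}^{*}\rangle_{L}=0$, we need a point $\vec{p}$ with $\vec{p}\times_{L}\vec{x}=\vec{x}^{*}$. I would try $\vec{p}=c\,\vec{x}\times_{L}\vec{x}^{*}$ and choose the scalar $c$ from the Lorentzian double cross product identity
\[
(\vec{a}\times_{L}\vec{b})\times_{L}\vec{c}=\pm\left(\langle\vec{a},\vec{c}\rangle_{L}\vec{b}-\langle\vec{b},\vec{c}\rangle_{L}\vec{a}\right).
\]
Applying it with $\vec{c}=\vec{x}$ and using $\langle\vec{x}^{*},\vec{x}\rangle_{L}=0$, the term in $\vec{x}$ drops out. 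What remains is a multiple of $\langle\vec{x},\vec{x}\rangle_{L}\,\vec{x}^{*}$, and we can solve for $c$ because $\langle\vec{x},\vec{x}\rangle_{L}=\pm1\neq0$. The line through $\vec{p}$ with direction $\vec{x}$ then maps to the given point.

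\emph{Main obstacle.} The main difficulty is keeping the signs straight for the convention $\vec{x}\times_{L}\vec{y}$ used in this paper. Both the triple-product identity and the double cross product identity change sign relative to the Euclidean case, and the signs depend on the causal character of $\vec{x}$. I would first verify both identities in coordinates. After that, the remaining steps are routine.
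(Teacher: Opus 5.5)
The paper does not prove this theorem. It is quoted from \cite{ug} as background, so there is no in-paper argument to compare against. Your moment-vector (Plücker) construction is the standard proof, and it is correct as written: well-definedness, the unit condition, injectivity and surjectivity all go through.

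The signs you flagged can be settled once, and they do not depend on the causal character of $\vec{x}$. With this paper's convention, $\vec{a}\times_{L}\vec{b}=\operatorname{diag}(1,1,-1)\,(\vec{a}\times\vec{b})$, where $\times$ is the Euclidean cross product. This gives two identities:
\begin{itemize}
\item $\langle\vec{a}\times_{L}\vec{b},\vec{c}\rangle_{L}=\det(\vec{a},\vec{b},\vec{c})$, with no sign change.
\item $(\vec{a}\times_{L}\vec{b})\times_{L}\vec{c}=\langle\vec{b},\vec{c}\rangle_{L}\vec{a}-\langle\vec{a},\vec{c}\rangle_{L}\vec{b}$.
\end{itemize}
Only the scalar you solve for depends on $\vec{x}$. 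It is $c=-\langle\vec{x},\vec{x}\rangle_{L}$, so $\vec{p}=-\langle\vec{x},\vec{x}\rangle_{L}\,\vec{x}\times_{L}\vec{x}^{*}$. For example, $\vec{x}=\vec{e}_3$ and $\vec{x}^{*}=\vec{e}_1$ give $\vec{p}=\vec{e}_2$, and indeed $\vec{e}_2\times_{L}\vec{e}_3=\vec{e}_1$.

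The same factorization shows that $\vec{p}\mapsto\vec{p}\times_{L}\vec{x}$ has kernel $\operatorname{span}\{\vec{x}\}$ and image $\{\vec{y}:\langle\vec{x},\vec{y}\rangle_{L}=0\}$ for every nonzero $\vec{x}$. So non-nullness is not what drives injectivity. It is needed so that $\vec{x}$ can be normalized and the image lands in $H^{2}$ or $S_{1}^{2}$.

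It is also worth stating that, with the paper's definitions, $H^{2}$ includes both sheets of $\langle\vec{x},\vec{x}\rangle_{L}=-1$. That is exactly what makes the correspondence with \emph{oriented} timelike lines bijective, since reversing orientation sends $\vec{X}$ to $-\vec{X}$.
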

A ruled surface in \(\mathbb{R}_{1}^{3}\) is the locus traced out by a moving straight line. Such a surface is called spacelike if the induced metric is positive definite (equivalently, its normal vector is timelike everywhere), whereas it is called timelike if the induced metric has Lorentzian signature (equivalently, its normal vector is spacelike everywhere).
\section{The tangent bundles of $\mathbb{S}_{1}^{2}$ and $\mathbb{H}_{0}^{2}$ and hyper-dual numbers}
In this section, some concepts of hyper-dual numbers and the tangent bundles of $\mathbb{S}_{1}^{2}$ and $\mathbb{H}_{0}^{2}$ are introduced. \\ 
The set of hyper-dual numbers is 
\begin{eqnarray*}
D_{2}&=&\{ X_{2}=(x_{0}+\varepsilon x_{1})+\varepsilon^{*}(x_{2}+\varepsilon x_{3}): x_{0},x_{1},x_{2},x_{3} \in \mathbb{R}\} \\
&=& \{ X_{2}=X+\varepsilon^{*}X^{*}: X, X^{*}\in D\},
\end{eqnarray*}
where $\varepsilon^{2}=\varepsilon^{*2}=(\varepsilon\varepsilon^{*})^{2}=0$ and $\varepsilon \neq 0, \varepsilon \neq \varepsilon^{*}, \varepsilon\varepsilon^{*}=\varepsilon^{*}\varepsilon \neq 0$ in \cite{fike}. The operations of addition and inner product on any pair of hyper-dual numbers are
\begin{eqnarray*}
X_{2}&=&X+\varepsilon^{*}X^{*}=x_{0}+\varepsilon x_{1}+\varepsilon^{*}x_{2}+\varepsilon \varepsilon^{*} x_{3}, \ \  \\
Y_{2}&=&Y+\varepsilon^{*}Y^{*}=y_{0}+\varepsilon y_{1}+\varepsilon^{*}y_{2}+\varepsilon \varepsilon^{*} y_{3}
\end{eqnarray*}
are, respectively, defined as
\begin{eqnarray*}
X_{2}+Y_{2}&=&(X+Y)+\varepsilon^{*}(X^{*}+Y^{*}) \\
&=&(x_{0}+y_{0})+\varepsilon(x_{1}+y_{1})+\varepsilon^{*}(x_{2}+y_{2})+\varepsilon\varepsilon^{*}(x_{3}+y_{3})
\end{eqnarray*}
and 
\begin{eqnarray*}
\langle X_{2},Y_{2} \rangle&=&\langle X,Y \rangle+\varepsilon^{*}(\langle X^{*},Y \rangle+\langle X, Y^{*} \rangle) \\
	&=&(\langle x_{0}, y_{0} \rangle)+\varepsilon(\langle x_{0}, y_{1} \rangle+\langle x_{1}, y_{0} \rangle)+\varepsilon^{*}(\langle x_{0}, y_{2} \rangle+ \langle x_{2}, y_{0} \rangle) \\
    &+&\varepsilon\varepsilon^{*}(\langle x_{0},y_{3} \rangle+ \langle x_{1},y_{2} \rangle+ \langle x_{2}, y_{1} \rangle+ \langle x_{3}, y_{0} \rangle).
\end{eqnarray*}
The set of all hyper-dual vectors is 
\begin{equation*}
\hat{\mathbb{D}}_{2}=\{ \vec{X}_{2}=\vec{X}+\varepsilon^{*} \vec{X}^{*}: \vec{X}, \vec{X}^{*} \in \mathbb{D}^{3}: \varepsilon^{*2}=0 \}.
\end{equation*}
Let $\vec{X}_{2}=\vec{X}+\varepsilon^{*} \vec{X}^{*},$ $\vec{Y}_{2}=\vec{Y}+\varepsilon^{*} \vec{Y}^{*} \in \hat{\mathbb{D}}_{2}.$
The Lorentzian inner and vector products of $\vec{X}_{2}$ and $\vec{Y}_{2}$ are, respectively, given as follows: 
\begin{eqnarray*}
\langle \vec{X}_{2}, \vec{Y}_{2} \rangle_{L}&=& \langle \vec{X}, \vec{Y}\rangle_{L}+\varepsilon^{*}(\langle \vec{X}, \vec{Y}^{*} \rangle_{L}+ \langle \vec{X}^{*}, \vec{Y} \rangle_{L}) \\
&=& \langle \vec{x}_{0}, \vec{y}_{0} \rangle_{L}+\varepsilon(\langle \vec{x}_{0}, \vec{y}_{1} \rangle_{L}+\langle \vec{x}_{1}, \vec{y}_{0} \rangle_{L})+\varepsilon^{*}(\langle \vec{x}_{0}, \vec{y}_{2} \rangle_{L}+\langle \vec{x}_{2}, \vec{y}_{0} \rangle_{L}) \\
&+& \varepsilon \varepsilon^{*}(\langle \vec{x}_{0}, \vec{y}_{3} \rangle_{L}+\langle \vec{x}_{1}, \vec{y}_{2} \rangle_{L}+\langle \vec{x}_{2}, \vec{y}_{1} \rangle_{L}+\langle \vec{x}_{3}, \vec{y}_{0} \rangle_{L})
\end{eqnarray*}
and 
\begin{eqnarray*}
\vec{X}_{2} \times_{L} \vec{Y}_{2} &=&  \vec{X} \times_{L} \vec{Y}+\varepsilon^{*}( \vec{X} \times_{L} \vec{Y}^{*} +  \vec{X}^{*} \times_{L} \vec{Y}) \\
	&=&  \vec{x}_{0} \times_{L} \vec{y}_{0} +\varepsilon( \vec{x}_{0} \times_{L} \vec{y}_{1} + \vec{x}_{1} \times_{L} \vec{y}_{0} )+\varepsilon^{*}( \vec{x}_{0} \times_{L} \vec{y}_{2} + \vec{x}_{2} \times_{L} \vec{y}_{0} ) \\
	&+& \varepsilon \varepsilon^{*} ( \vec{x}_{0} \times_{L} \vec{y}_{3} + \vec{x}_{1} \times_{L} \vec{y}_{2} +\vec{x}_{2} \times_{L} \vec{y}_{1} + \vec{x}_{3} \times_{L} \vec{y}_{0} ).
\end{eqnarray*}
Moreover, the set 
\begin{equation*}
\hat{\mathbb{D}}^{3}_{1}=\{ \vec{X}_{2}=\vec{X}+\varepsilon^{*} \vec{X}^{*}: \vec{X}, \vec{X}^{*} \in \mathbb{D}_{1}^{3}: \varepsilon^{*2}=0 \}
\end{equation*}
is called a hyper-dual Lorentzian space, see \cite{bekar2}. Every element of this space is termed a hyper-dual split vector. The causal character of \(\vec{X}_{2}=\vec{X}+\varepsilon^{*}\vec{X}^{*}\) is determined by that of \(\vec{X}\), namely, spacelike, timelike, or lightlike. The Lorentzian vector product of two hyper-dual split vectors $\vec{X}_{2}=\vec{X}+\varepsilon^{*} \vec{X}^{*}$ and $\vec{Y}_{2}=\vec{Y}+\varepsilon^{*} \vec{Y}^{*}$ is
\begin{eqnarray*}
	\vec{X}_{2} \times_{L} \vec{Y}_{2} &=&  \vec{X} \times_{L} \vec{Y}+\varepsilon^{*}( \vec{X} \times_{L} \vec{Y}^{*} +  \vec{X}^{*} \times_{L} \vec{Y}) \\
	&=&  \vec{x}_{0} \times_{L} \vec{y}_{0} +\varepsilon( \vec{x}_{0} \times_{L} \vec{y}_{1} + \vec{x}_{1} \times_{L} \vec{y}_{0} )+\varepsilon^{*}( \vec{x}_{0} \times_{L} \vec{y}_{2} + \vec{x}_{2} \times_{L} \vec{y}_{0} ) \\
	&+& \varepsilon \varepsilon^{*} ( \vec{x}_{0} \times_{L} \vec{y}_{3} + \vec{x}_{1} \times_{L} \vec{y}_{2} +\vec{x}_{2} \times_{L} \vec{y}_{1} + \vec{x}_{3} \times_{L} \vec{y}_{0} ).
\end{eqnarray*}
The Lorentzian inner product is
\begin{eqnarray*}
	\langle \vec{X}_{2}, \vec{X}_{2} \rangle_{L}&=& \langle \vec{X}, \vec{X}\rangle_{L}+2\varepsilon^{*}\langle \vec{X}, \vec{X}^{*} \rangle_{L}  \\
	&=& \langle \vec{x}_{0}, \vec{x}_{0} \rangle_{L}+2\varepsilon\langle \vec{x}_{0}, \vec{x}_{1} \rangle_{L}+2\varepsilon^{*}\langle \vec{x}_{0}, \vec{x}_{2} \rangle_{L} \\
	&+& 2\varepsilon \varepsilon^{*}(\langle \vec{x}_{0}, \vec{x}_{3} \rangle_{L}+\langle \vec{x}_{1}, \vec{x}_{2} \rangle_{L}).
\end{eqnarray*}
The norm of $\vec{X}_{2}$ is

\begin{equation}
\begin{aligned}
\|\vec{X}_2\|_L
={}&|\vec{x}_0|_L\\
&+\varepsilon
\frac{
\operatorname{sgn}(\langle\vec{x}_0,\vec{x}_0\rangle_L)
\langle\vec{x}_0,\vec{x}_1\rangle_L
}{|\vec{x}_0|_L}\\
&+\varepsilon^*
\frac{
\operatorname{sgn}(\langle\vec{x}_0,\vec{x}_0\rangle_L)
\langle\vec{x}_0,\vec{x}_2\rangle_L
}{|\vec{x}_0|_L}\\
&+\varepsilon\varepsilon^*
\Biggl(
\frac{
\operatorname{sgn}(\langle\vec{x}_0,\vec{x}_0\rangle_L)
\left(
\langle\vec{x}_0,\vec{x}_3\rangle_L+
\langle\vec{x}_1,\vec{x}_2\rangle_L
\right)
}{|\vec{x}_0|_L}\\
&\qquad-
\frac{
\langle\vec{x}_0,\vec{x}_1\rangle_L
\langle\vec{x}_0,\vec{x}_2\rangle_L
}{|\vec{x}_0|_L^3}
\Biggr), \quad \langle \vec{x}_0, \vec{x}_0 \rangle_{L} \neq 0.
\end{aligned}
\end{equation}
If $\langle \vec{X}_{2}, \vec{X}_{2} \rangle_{L}= \pm 1$ (i.e., $\langle \vec{X}, \vec{X} \rangle_{L}= \pm1$, $\langle \vec{X}, \vec{X}^{*} \rangle_{L}= 0$), then $\vec{X}_{2}=\vec{X}+\varepsilon^{*} \vec{X}^{*}$ is called a unit hyper-dual split vector, see \cite{bekar2}.
\begin{defn} \cite{bekar2}
The set which consists of all unit hyper-dual spacelike vectors is called Lorentzian hyper-dual unit sphere:
\begin{equation*}
\hat{\mathbb{S}}_{1}^{2}=\{ \vec{X}_{2}=\vec{X}+\varepsilon^{*} \vec{X}^{*}: \langle \vec{X}_{2}, \vec{X}_{2} \rangle_{L}=1, \vec{X}_{2} \in \hat{\mathbb{D}}^{3}_{1}  \}.
\end{equation*} 
The set which consists of all unit hyper-dual timelike vectors is called hyperbolic hyper-dual unit sphere:
\begin{equation*}
	\hat{\mathbb{H}}_{0}^{2}=\{ \vec{X}_{2}=\vec{X}+\varepsilon^{*} \vec{X}^{*}: \langle \vec{X}_{2}, \vec{X}_{2} \rangle_{L}=-1, \vec{X}_{2} \in \hat{\mathbb{D}}^{3}_{1}  \}.
\end{equation*} 
\end{defn}
If \(\vec{X}_{2}=\vec{X}+\varepsilon^{*}\vec{X}^{*}\) belongs to \(\hat{\mathbb{S}}_{1}^{2}\), then \(\vec{X}\) is a unit dual spacelike vector and \(\vec{X}^{*}\) is a dual split vector. In contrast, when \(\vec{X}_{2}\in\hat{\mathbb{H}}_{0}^{2}\), the vector \(\vec{X}\) is unit dual timelike, while \(\vec{X}^{*}\) cannot be dual timelike.
\begin{thm}\label{thm}
The Lorentzian hyper-dual unit sphere $\hat{\mathbb S}^2_1$ (resp., the
hyperbolic hyper-dual unit sphere $\hat{\mathbb H}^2_0$) is isomorphic to the
tangent bundle $T\mathbb S^2_1$ of the dual Lorentzian unit sphere (resp., the
tangent bundle $T\mathbb H^2$ of the dual hyperbolic unit sphere).
\end{thm}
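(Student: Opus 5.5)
The plan is to write down the map explicitly and check that it is well defined, bijective, and compatible with the algebraic structure, imitating the dual-number isomorphisms (\ref{212}) and (\ref{213}). First I need to pin down what $T\mathbb S^2_1$ and $T\mathbb H^2$ mean here. By analogy with $T\mathbb{S}_1^2$ over the real pseudo-sphere, I take
\[
T S^2_1=\{(\vec X,\vec X^{*})\in S^2_1\times \mathbb{D}^3_1:\ \langle \vec X,\vec X^{*}\rangle_L=0\},
\]
and define $T H^2$ in the same way with $H^2$ in place of $S^2_1$. Here the inner product is the dual Lorentzian one, so the condition $\langle\vec X,\vec X^{*}\rangle_L=0$ is an equation in $D$.

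The candidate map is
\[
\Phi:\hat{\mathbb S}^2_1\to TS^2_1,\qquad \vec X_2=\vec X+\varepsilon^{*}\vec X^{*}\longmapsto (\vec X,\vec X^{*}),
\]
and similarly $\hat{\mathbb H}^2_0\to TH^2$. To see that $\Phi$ lands in $TS^2_1$, expand $\langle\vec X_2,\vec X_2\rangle_L=\langle\vec X,\vec X\rangle_L+2\varepsilon^{*}\langle\vec X,\vec X^{*}\rangle_L$ and compare coefficients of $1$ and $\varepsilon^{*}$, treating the hyper-dual number as a pair in $D\times D$. The condition $\langle\vec X_2,\vec X_2\rangle_L=1$ then gives two dual equations, $\langle\vec X,\vec X\rangle_L=1$ and $\langle\vec X,\vec X^{*}\rangle_L=0$.

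The first equation must be matched with membership in $S^2_1$. Writing $\vec X=\vec x_0+\varepsilon\vec x_1$, it splits into $\langle\vec x_0,\vec x_0\rangle_L=1$ and $\langle\vec x_0,\vec x_1\rangle_L=0$, which is exactly the definition of $S^2_1$. It also makes $\vec X_2$ spacelike, since the causal character is that of $\vec x_0$. The second equation is exactly the tangency condition. Writing $\vec X^{*}=\vec x_2+\varepsilon\vec x_3$, it splits into the real conditions $\langle\vec x_0,\vec x_2\rangle_L=0$ and $\langle\vec x_0,\vec x_3\rangle_L+\langle\vec x_1,\vec x_2\rangle_L=0$, which agrees with the $\varepsilon^{*}$ and $\varepsilon\varepsilon^{*}$ coefficients in the displayed expansion of $\langle\vec X_2,\vec X_2\rangle_L$.

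Conversely, any pair $(\vec X,\vec X^{*})\in TS^2_1$ gives the element $\vec X+\varepsilon^{*}\vec X^{*}$, and by the same computation read backwards it lies in $\hat{\mathbb S}^2_1$. This inverse map shows $\Phi$ is a bijection. It is moreover $D$-linear in the obvious componentwise sense and a diffeomorphism, because in the coordinates $(\vec x_0,\vec x_1,\vec x_2,\vec x_3)\in(\mathbb R^3)^4$ it is just the identity. The timelike case is identical, with $-1$ in place of $1$ and $H^2$ in place of $S^2_1$. The remark that $\vec X^{*}$ cannot be dual timelike when $\vec X$ is unit timelike follows as a consistency check: a vector Lorentz-orthogonal to a timelike vector is spacelike.

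The main obstacle is not computational but lies in making "isomorphic" precise. I must say which structure is preserved: a bijection of sets, a diffeomorphism of real manifolds, or an isomorphism of $D$-modules fibrewise. I must also justify treating $\langle\cdot,\cdot\rangle_L$ on $\hat{\mathbb D}^3_1$ as $D$-valued, so that coefficient comparison in $1,\varepsilon^{*}$ over $D$ is legitimate. I would handle this by stating that $\{1,\varepsilon,\varepsilon^{*},\varepsilon\varepsilon^{*}\}$ is a basis of $D_2$ over $\mathbb R$, so equality of hyper-dual numbers is equality of all four real coefficients. I would then claim $\Phi$ is a diffeomorphism between the corresponding real submanifolds of $(\mathbb R^3)^4$ that respects the fibres over $S^2_1$ (resp.\ $H^2$), which is what the later E.~Study correspondence uses.
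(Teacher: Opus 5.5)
Your proposal is correct and follows essentially the same route as the paper. Both use the map $\vec X+\varepsilon^{*}\vec X^{*}\mapsto(\vec X,\vec X^{*})$, prove well-definedness by comparing the $1$ and $\varepsilon^{*}$ components in $D$ of $\langle\vec X_2,\vec X_2\rangle_L=\pm1$, give the explicit inverse $(\vec X,\vec Y)\mapsto\vec X+\varepsilon^{*}\vec Y$, and note that both maps are the identity on the real coordinates $(\vec x_0,\vec x_1,\vec x_2,\vec x_3)$, preserve fibres, and are $D$-linear on them; your care in making ``isomorphic'' precise matches the paper's closing ``isomorphism of bundles'' step.
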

\begin{proof}
Let $\sigma=1$ for $\hat{\mathbb S}^2_1$ and $\sigma=-1$ for
$\hat{\mathbb H}^2_0$, and correspondingly let $\mathbb K=\mathbb S^2_1$ or
$\mathbb K=\mathbb H^2$. Consider the map
\begin{equation}\label{eq:3.2}
F:\hat{\mathbb S}^2_1\to T\mathbb S^2_1
\quad(\text{resp. } F:\hat{\mathbb H}^2_0\to T\mathbb H^2),\qquad
F(\vec X_2)=(\vec X,\vec X^{*}),
\end{equation}
where $\vec X_2=\vec X+\varepsilon^{*}\vec X^{*}$ with
$\vec X,\vec X^{*}\in\mathbb D^3_1$. \emph{$F$ is well defined.} Since $\varepsilon^{*2}=0$,
\[
\langle\vec X_2,\vec X_2\rangle_L
=\langle\vec X,\vec X\rangle_L+2\varepsilon^{*}\langle\vec X,\vec X^{*}\rangle_L .
\]
Every hyper-dual number can be written uniquely as $A+\varepsilon^{*}B$ with
$A,B\in\mathbb D$. Comparing the two components, the condition
$\langle\vec X_2,\vec X_2\rangle_L=\sigma$ is equivalent to
\[
\langle\vec X,\vec X\rangle_L=\sigma
\qquad\text{and}\qquad
\langle\vec X,\vec X^{*}\rangle_L=0 .
\]
The first equation says that $\vec X\in\mathbb K$, and the second says that
$(\vec X,\vec X^{*})\in T\mathbb K$. \emph{$F$ is bijective.} The map
$G:T\mathbb K\to\hat{\mathbb S}^2_1$ (resp.\ $\hat{\mathbb H}^2_0$),
$G(\vec X,\vec Y)=\vec X+\varepsilon^{*}\vec Y$, is well defined by the same
equivalence, and clearly $G\circ F=\mathrm{id}$ and $F\circ G=\mathrm{id}$. \emph{$F$ is an isomorphism of bundles.} Writing
$\vec X=\vec x_0+\varepsilon\vec x_1$ and
$\vec X^{*}=\vec x_2+\varepsilon\vec x_3$, both $F$ and $G$ act as the identity
on the real coordinates $(\vec x_0,\vec x_1,\vec x_2,\vec x_3)$; hence they are
smooth and mutually inverse. Moreover, $F$ maps the set of points of the
hyper-dual sphere with a fixed first component $\vec X$ onto the fibre
\[
T_{\vec X}\mathbb K=\{\vec Y\in\mathbb D^3_1:\ \langle\vec X,\vec Y\rangle_L=0\},
\]
and is $\mathbb D$-linear in the second component. Hence $F$ is an isomorphism
of $\hat{\mathbb S}^2_1$ onto $T\mathbb S^2_1$ (resp.\ of
$\hat{\mathbb H}^2_0$ onto $T\mathbb H^2$), where $\vec X$ and $\vec X^{*}$
are vectors in $\mathbb D^3_1$.
\end{proof}
\begin{thm}[E. Study Hyper-Map in Minkowski Space \cite{gok} ]
Each point of the Lorentzian unit hyper-dual sphere \(\hat{\mathbb{S}}_{1}^{2}\) (resp., the hyperbolic unit hyper-dual sphere \(\hat{\mathbb{H}}_{0}^{2}\)) determines an oriented dual spacelike line (resp., an oriented dual timelike line) in \(\mathbb{D}_{1}^{3}\).
\end{thm}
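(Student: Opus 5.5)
The plan is to reduce the statement to the classical E. Study Map of \cite{ug}, using Theorem~\ref{thm} to interpret the $\varepsilon^{*}$-component. Write a point of $\hat{\mathbb S}^2_1$ (resp.\ $\hat{\mathbb H}^2_0$) as $\vec X_2=\vec X+\varepsilon^{*}\vec X^{*}$, with $\vec X=\vec x_0+\varepsilon\vec x_1$ and $\vec X^{*}=\vec x_2+\varepsilon\vec x_3$. As in the proof of Theorem~\ref{thm}, compare the $\varepsilon^{*}$-free and $\varepsilon^{*}$-parts of $\langle\vec X_2,\vec X_2\rangle_L=\sigma$. This gives $\langle\vec X,\vec X\rangle_L=\sigma$ and $\langle\vec X,\vec X^{*}\rangle_L=0$. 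Comparing the real and $\varepsilon$-parts of the first equation then gives $\langle\vec x_0,\vec x_0\rangle_L=\sigma$ and $\langle\vec x_0,\vec x_1\rangle_L=0$, so $\vec X\in S^2_1$ (resp.\ $H^2$).

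By the E. Study Map of \cite{ug}, $\vec X$ corresponds to a unique oriented spacelike (resp.\ timelike) line $L$ in $\mathbb R^3_1$. Its direction is $\vec x_0$ and its moment is $\vec x_1=\vec p\times_L\vec x_0$. A point of $L$ is recovered as $\vec p=\sigma\,\vec x_0\times_L\vec x_1$, up to the sign conventions of $\times_L$, which I would check against the formula in Section~2.

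Next I would use the bundle picture. Under the isomorphism $F$ of Theorem~\ref{thm}, $\vec X_2$ becomes $(\vec X,\vec X^{*})\in T_{\vec X}\mathbb K$. So $\vec X^{*}$ is a dual vector Lorentz-orthogonal to $\vec X$, that is, an infinitesimal displacement of the line $L$. The oriented dual line in $\mathbb D^3_1$ determined by $\vec X_2$ is then defined by the hyper-dual analogue of the Plücker construction. Its dual direction is the unit dual vector $\vec X$, and its dual moment is $\vec X^{*}$. The dual base point is
\[
\vec P=\sigma\,\vec X\times_L\vec X^{*}\in\mathbb D^3_1,
\]
and the line is $\{\vec P+\lambda\vec X:\lambda\in\mathbb D\}$. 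Its orientation is induced by $\vec X$, and its causal character is that of $\vec X$, hence of $\vec x_0$: spacelike for $\sigma=1$, timelike for $\sigma=-1$.

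The verification step is to show that $\vec P\times_L\vec X=\vec X^{*}$, so that the moment is indeed recovered. For this I would use the dual Lorentzian identity
\[
(\vec A\times_L\vec B)\times_L\vec C=\pm\bigl(\langle\vec A,\vec C\rangle_L\vec B-\langle\vec B,\vec C\rangle_L\vec A\bigr),
\]
which holds over $\mathbb D$ because it is polynomial in the components. Together with $\langle\vec X,\vec X\rangle_L=\sigma$ and $\langle\vec X,\vec X^{*}\rangle_L=0$, this should give back $\vec X^{*}$ after a correct choice of sign. I would also check that the line is independent of the chosen base point: replacing $\vec P$ by $\vec P+\lambda\vec X$ leaves the moment unchanged, since $\vec X\times_L\vec X=0$.

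The main obstacle is getting the signs of the Lorentzian triple-product identity right in both causal cases. If a direct computation is messy, I would verify the identity on the real components $\vec x_0,\vec x_1,\vec x_2,\vec x_3$ by expanding in $\varepsilon$ and $\varepsilon\varepsilon^{*}$ term by term, using only the classical identity in $\mathbb R^3_1$. I would also remark that the assignment is injective, since $(\vec X,\vec X^{*})$ is determined by the line (direction and moment). So $\vec X_2$ determines a unique oriented dual line, which is the statement.
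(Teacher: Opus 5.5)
The paper gives no proof of this statement. It is quoted from \cite{gok}, and the construction only reappears in Theorem~\ref{42}, where the dual line attached to $\Gamma=\vec C+\varepsilon^{*}\vec C^{*}$ is taken to be $\{\vec C\times_L\vec C^{*}+V\vec C:\ V\in\mathbb D\}$. Your argument is correct and is the natural one. The hyper-dual numbers are the dual numbers over $\mathbb D$, and the inner and cross products on $\hat{\mathbb D}^3_1$ are the $\varepsilon^{*}$-linear extensions of the dual ones. So the classical E.~Study construction goes through verbatim with $\mathbb R$ replaced by $\mathbb D$. All it needs is that $\sigma$ is a unit of $\mathbb D$ and that the double cross product identity, being polynomial, holds over $\mathbb D$. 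The detour through the real line $L$ and the tangent-bundle reading of $\vec X^{*}$ is harmless but not needed. The causal type is read off from $\vec x_0$ exactly as you say.

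The sign you left open should be fixed as follows. With the cross product of Section~2 one has $(\vec A\times_L\vec B)\times_L\vec C=\langle\vec B,\vec C\rangle_L\vec A-\langle\vec A,\vec C\rangle_L\vec B$. So your choice $\vec P=\sigma\,\vec X\times_L\vec X^{*}$ gives $\vec P\times_L\vec X=-\vec X^{*}$, and likewise $\sigma\,\vec x_0\times_L\vec x_1=-\vec p$ at the real level. Take $\vec P=\sigma\,\vec X^{*}\times_L\vec X$ instead. The paper's base point $\vec C\times_L\vec C^{*}$ corresponds to the moment $-\sigma\vec C^{*}$. That is just another convention and equally acceptable, since the theorem only asserts that a line is determined.

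For your injectivity remark, add two small facts:
\begin{enumerate}
\item Applying the same identity to $\vec R=\vec Q-\vec P$ shows that $\vec R\times_L\vec X=0$ forces $\vec R=\sigma\langle\vec R,\vec X\rangle_L\vec X$. Hence the line is exactly the locus $\{\vec Q:\ \vec Q\times_L\vec X=\vec X^{*}\}$.
\item A generator $\mu\vec X$ of $\mathbb D\vec X$ with $\langle\mu\vec X,\mu\vec X\rangle_L=\sigma$ must satisfy $\mu^2=1$, i.e.\ $\mu=\pm1$ in $\mathbb D$, and the orientation fixes the sign.
\end{enumerate}
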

Consequently, we write
\begin{align*}
\hat{\mathbb{S}}^2_1 &\;\rightarrow\; T\mathbb{S}^2_1 \;\rightarrow\; T(TS^2_1),\\
\hat{\mathbb{H}}^2_0 &\;\rightarrow\; T\mathbb{H}^2 \;\rightarrow\; T(TH^2_0),
\end{align*}
where $T(TS^2_1)$ and $T(TH^2_0)$ denote the tangent bundles of $TS^2_1$ and
$TH^2_0$, respectively. \\
Let $S_1^2$ and $H_0^2$ denote the
Lorentzian and hyperbolic unit spheres
in $\mathbb{R}_1^3$, respectively.
Their dual counterparts are denoted
by $\mathbb{S}_1^2$ and $\mathbb{H}^2$. We define the unit spacelike
tangent bundle of $S_1^2$ by
\[
T_1^+S_1^2
=
\left\{
(\vec{x},\vec{u})\in TS_1^2:
\langle\vec{u},\vec{u}\rangle_L=1
\right\}.
\]
Similarly, the unit tangent
bundle of $H_0^2$ is defined by
\[
T_1^+H_0^2
=
\left\{
(\vec{x},\vec{u})\in TH_0^2:
\langle\vec{u},\vec{u}\rangle_L=1
\right\}.
\]
For the dual Lorentzian unit sphere,
we introduce
\[
T_1^+\mathbb{S}_1^2
=
\left\{
(\vec{X},\vec{Y})
\in T\mathbb{S}_1^2:
\langle\vec{Y},\vec{Y}\rangle_L=1
\right\}.
\]
For the dual hyperbolic unit sphere,
we define
\[
T_1^+\mathbb{H}^2
=
\left\{
(\vec{X},\vec{Y})
\in T\mathbb{H}^2:
\langle\vec{Y},\vec{Y}\rangle_L=1
\right\}.
\]
\begin{cor}
\label{cor:unit_lorentzian_hyperdual}
Let $\hat{\mathbb{S}}_1^2$ be the
Lorentzian hyper-dual unit sphere.
Define
\[
U\hat{\mathbb{S}}_1^2
=
\left\{
\vec{X}_2
=
\vec{X}+\varepsilon^*\vec{X}^*
\in\hat{\mathbb{S}}_1^2:
\langle\vec{X}^*,\vec{X}^*\rangle_L=1
\right\}.
\]
Then there exist natural
diffeomorphisms
\[
U\hat{\mathbb{S}}_1^2
\cong
T_1^+\mathbb{S}_1^2
\cong
T(T_1^+S_1^2).
\]
\end{cor}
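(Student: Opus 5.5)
The plan is to obtain the first diffeomorphism by restricting the bundle isomorphism of Theorem~\ref{thm}. The second will come from writing every dual constraint in real components and checking that the dual-part equations are exactly the linearization of the real-part equations. That linearization is precisely the description of a tangent bundle of a submanifold of $\mathbb{R}^6$.

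\emph{Step 1.} Let $F(\vec X_2)=(\vec X,\vec X^{*})$ be the map of Theorem~\ref{thm}. It is a bijection $\hat{\mathbb S}^2_1\to T\mathbb S^2_1$ and acts as the identity on the real coordinates. The extra condition $\langle\vec X^*,\vec X^*\rangle_L=1$ defining $U\hat{\mathbb S}^2_1$ is literally the condition $\langle\vec Y,\vec Y\rangle_L=1$ defining $T^+_1\mathbb S^2_1$. Hence $F$ restricts to a bijection $U\hat{\mathbb S}^2_1\to T_1^+\mathbb S^2_1$. Since $F$ and its inverse $G$ are identity maps in the coordinates $(\vec x_0,\vec x_1,\vec x_2,\vec x_3)\in\mathbb R^{12}$, the restriction is a diffeomorphism onto its image.

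\emph{Step 2.} Write $\vec X=\vec x_0+\varepsilon\vec x_1$ and $\vec Y=\vec x_2+\varepsilon\vec x_3$. A dual equation such as $\langle\vec X,\vec X\rangle_L=1$ means that the real part equals $1$ and the $\varepsilon$-part equals $0$. So $(\vec X,\vec Y)\in T^+_1\mathbb S^2_1$ is equivalent to the two systems
\[
\langle\vec x_0,\vec x_0\rangle_L=1,\quad \langle\vec x_0,\vec x_2\rangle_L=0,\quad \langle\vec x_2,\vec x_2\rangle_L=1,
\]
\[
\langle\vec x_0,\vec x_1\rangle_L=0,\quad \langle\vec x_0,\vec x_3\rangle_L+\langle\vec x_1,\vec x_2\rangle_L=0,\quad \langle\vec x_2,\vec x_3\rangle_L=0 .
\]
The first system says exactly $(\vec x_0,\vec x_2)\in T_1^+S^2_1$. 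Let $\Phi:\mathbb R^3_1\times\mathbb R^3_1\to\mathbb R^3$ be
\[
\Phi(\vec a,\vec b)=\bigl(\langle\vec a,\vec a\rangle_L,\ \langle\vec a,\vec b\rangle_L,\ \langle\vec b,\vec b\rangle_L\bigr),
\]
so that $T_1^+S^2_1=\Phi^{-1}(1,0,1)$. Then
\[
d\Phi_{(\vec x_0,\vec x_2)}(\vec x_1,\vec x_3)=\bigl(2\langle\vec x_0,\vec x_1\rangle_L,\ \langle\vec x_0,\vec x_3\rangle_L+\langle\vec x_1,\vec x_2\rangle_L,\ 2\langle\vec x_2,\vec x_3\rangle_L\bigr).
\]
Its vanishing is exactly the second system. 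Thus the map $(\vec X,\vec Y)\mapsto\bigl((\vec x_0,\vec x_2),(\vec x_1,\vec x_3)\bigr)$ identifies $T_1^+\mathbb S^2_1$ with $\{(p,v):p\in\Phi^{-1}(1,0,1),\ v\in\ker d\Phi_p\}$. This set is $T(T^+_1S^2_1)$, provided $(1,0,1)$ is a regular value. The map is a coordinate permutation of $\mathbb R^{12}$, hence a diffeomorphism.

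\emph{Main obstacle: regularity of $\Phi$.} I would check surjectivity of $d\Phi_p$ at $p=(\vec x_0,\vec x_2)$ with $\vec x_0,\vec x_2$ orthonormal spacelike vectors, by exhibiting preimages of the standard basis:
\[
d\Phi_p(\vec x_0,0)=(2,0,0),\qquad d\Phi_p(0,\vec x_0)=(0,1,0),\qquad d\Phi_p(0,\vec x_2)=(0,0,2).
\]
This holds because $\langle\vec x_0,\vec x_2\rangle_L=0$. Hence $T^+_1S^2_1$ is a smooth $3$-dimensional submanifold of $\mathbb R^6$, and its tangent space at $p$ is $\ker d\Phi_p$. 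As a consistency check, both sides have dimension $6$.

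Finally I would remark that the identifications are natural. They use no choices, only the splitting into real and $\varepsilon$-parts, and they commute with the bundle projections onto $\mathbb S^2_1$, respectively onto $T^+_1S^2_1$.
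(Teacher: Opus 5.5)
Your proof is correct and follows the route the paper intends. The paper gives no separate argument: it treats the corollary as the restriction of the isomorphism $F$ of Theorem~\ref{thm} to the extra condition $\langle\vec X^{*},\vec X^{*}\rangle_L=1$, followed by the identification $T\mathbb S^2_1\to T(TS^2_1)$ obtained by splitting into real and $\varepsilon$-parts. Your Step~2 supplies the justification the paper leaves implicit, especially the check that $(1,0,1)$ is a regular value of $\Phi$, so that the $\varepsilon$-part equations are exactly $\ker d\Phi_p=T_p(T_1^+S^2_1)$; it also furnishes the submanifold structure needed to call the restriction in Step~1 a diffeomorphism.
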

\begin{cor}
\label{cor:unit_hyperbolic_hyperdual}
Let $\hat{\mathbb{H}}_0^2$ be the
hyperbolic hyper-dual unit sphere.
Define
\[
U\hat{\mathbb{H}}_0^2
=
\left\{
\vec{X}_2
=
\vec{X}+\varepsilon^*\vec{X}^*
\in\hat{\mathbb{H}}_0^2:
\langle\vec{X}^*,\vec{X}^*\rangle_L=1
\right\}.
\]
Then there exist natural
diffeomorphisms
\[
U\hat{\mathbb{H}}_0^2
\cong
T_1^+\mathbb{H}^2
\cong
T(T_1^+H_0^2).
\]
\end{cor}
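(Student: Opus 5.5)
We prove the corollary in two stages, each an explicit map acting as the identity on real coordinates. The first diffeomorphism $U\hat{\mathbb{H}}_0^2\cong T_1^+\mathbb{H}^2$ is the restriction of the isomorphism $F(\vec X_2)=(\vec X,\vec X^{*})$ from Theorem~\ref{thm}, in the case $\sigma=-1$. By that theorem, $\vec X_2\in\hat{\mathbb H}^2_0$ exactly when $\langle\vec X,\vec X\rangle_L=-1$ and $\langle\vec X,\vec X^{*}\rangle_L=0$, i.e. when $(\vec X,\vec X^{*})\in T\mathbb H^2$. The extra defining condition $\langle\vec X^{*},\vec X^{*}\rangle_L=1$ of $U\hat{\mathbb H}^2_0$ is literally the condition $\langle\vec Y,\vec Y\rangle_L=1$ defining $T_1^+\mathbb H^2$. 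So $F$ and its inverse $G$ restrict to mutually inverse maps between these subsets. They are smooth because they are the identity on the coordinates $(\vec x_0,\vec x_1,\vec x_2,\vec x_3)$.

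For the second diffeomorphism we expand the dual conditions into real ones. Write $\vec X=\vec x_0+\varepsilon\vec x_1$ and $\vec X^{*}=\vec x_2+\varepsilon\vec x_3$. Then $(\vec X,\vec X^{*})\in T_1^+\mathbb H^2$ is equivalent to the six real equations
\begin{align*}
&\langle\vec x_0,\vec x_0\rangle_L=-1,\quad \langle\vec x_0,\vec x_1\rangle_L=0,\quad \langle\vec x_0,\vec x_2\rangle_L=0,\\
&\langle\vec x_0,\vec x_3\rangle_L+\langle\vec x_1,\vec x_2\rangle_L=0,\quad \langle\vec x_2,\vec x_2\rangle_L=1,\quad \langle\vec x_2,\vec x_3\rangle_L=0.
\end{align*}
The equations $\langle\vec x_0,\vec x_0\rangle_L=-1$, $\langle\vec x_0,\vec x_2\rangle_L=0$ and $\langle\vec x_2,\vec x_2\rangle_L=1$ say exactly that $p=(\vec x_0,\vec x_2)\in T_1^+H_0^2$. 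This manifold is the zero set of $\Phi(x,u)=(\langle x,x\rangle_L,\langle x,u\rangle_L,\langle u,u\rangle_L)$ at the value $(-1,0,1)$. Differentiating $\Phi$ at $p$ in the direction $(\vec x_1,\vec x_3)$ gives
\[
d\Phi_p(\vec x_1,\vec x_3)=\bigl(2\langle\vec x_0,\vec x_1\rangle_L,\ \langle\vec x_1,\vec x_2\rangle_L+\langle\vec x_0,\vec x_3\rangle_L,\ 2\langle\vec x_2,\vec x_3\rangle_L\bigr).
\]
So the remaining three equations say precisely that $(\vec x_1,\vec x_3)\in\ker d\Phi_p$. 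This is the usual principle that dual-number extensions of algebraic equations encode tangent vectors.

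It therefore remains to check that $(-1,0,1)$ is a regular value of $\Phi$, so that $\ker d\Phi_p=T_p(T_1^+H_0^2)$. This is the main point needing care, because we are in Lorentzian signature. At $p$, the vectors $\vec x_0$ and $\vec x_2$ are orthonormal, $\vec x_0$ timelike and $\vec x_2$ spacelike. Choosing $(\vec x_1,\vec x_3)$ equal to $(\vec x_0,0)$, then $(\vec x_2,0)$, then $(0,\vec x_2)$ gives images $(-2,0,0)$, $(0,1,0)$ and $(0,0,2)$, so $d\Phi_p$ is surjective. Hence $T_1^+H_0^2$ is a $3$-manifold whose tangent spaces are given by the linearized equations. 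The map
\[
(\vec x_0+\varepsilon\vec x_1,\ \vec x_2+\varepsilon\vec x_3)\longmapsto \bigl((\vec x_0,\vec x_2),(\vec x_1,\vec x_3)\bigr)
\]
is then a bijection $T_1^+\mathbb H^2\to T(T_1^+H_0^2)$. It is a coordinate permutation, hence a diffeomorphism, and it is fibre-preserving and linear on fibres. Composing with the first stage gives the chain $U\hat{\mathbb{H}}_0^2\cong T_1^+\mathbb{H}^2\cong T(T_1^+H_0^2)$.
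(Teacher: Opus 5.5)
Your proof is correct and follows the route the paper intends. The paper states this corollary without proof; it is implicitly obtained by restricting the isomorphism $F$ of Theorem~\ref{thm} to the unit subset and then using the identification $T\mathbb{H}^2\to T(TH^2_0)$ written after the E.~Study hyper-map. Your choice of $(\vec x_0,\vec x_2)$ as base point and your check that $(-1,0,1)$ is a regular value of $\Phi$ supply exactly the details the paper leaves implicit.
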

\begin{rem}
The notation $U\hat{\mathbb{S}}_1^2$ called the subset
corresponding to unit spacelike
tangent vectors. Since the Lorentzian metric on
$S_1^2$ is indefinite, the
unit timelike tangent vectors
form a distinct subset. On the other hand, the induced
metric on $H_0^2$ is positive
definite. Consequently, every
nonzero tangent vector to $H_0^2$
is spacelike, and the corresponding
unit tangent condition has
positive Lorentzian squared norm.
\end{rem}

\section{Non-null ruled surfaces in terms of pseudo hyper-dual tangent sphere bundles }
In this section, non-null ruled surfaces in $\mathbb{D}_{1}^{3}$ are introduced and classified. Moreover, their developability conditions are discussed.
\begin{defn} \label{41}
A dual ruled surface corresponding the hyper-dual Lorentzian curve $\Gamma(t)=(\vec{C}(t), \vec{C}^{*}(t))$ in $\mathbb{D}_{1}^{3}$ is defined as
\begin{equation*}
\phi(t,V)= \beta(t)+V\vec{C}(t)
\end{equation*}
where $\beta(t)$ and $\vec{C}(t)$ are the base curve and ruling, respectively. Here, $V=v+\varepsilon v^{*}$ is a dual number in $\mathbb{D}$.
\end{defn}
\begin{thm}\label{42}
Let $\Gamma(t)=\vec C(t)+\varepsilon^{*}\vec C^{*}(t)$ be a smooth hyper-dual
Lorentzian curve on $\hat{\mathbb S}^2_1$ (resp.\ $\hat{\mathbb H}^2_0$). Then
$\Gamma$ determines a unique dual ruled surface in $\mathbb D^3_1$,
\[
\phi(t,V)=\vec C(t)\times_L\vec C^{*}(t)+V\,\vec C(t),
\]
whose base curve is $\beta=\vec C\times_L\vec C^{*}$ and whose ruling is the unit
dual vector $\vec C$. This gives the maps
\[
\hat{\mathbb S}^2_1\to RS(\mathbb D^3_1),\qquad
\hat{\mathbb H}^2_0\to RS(\mathbb D^3_1),\qquad \Gamma\mapsto\phi ,
\]
where $RS(\mathbb D^3_1)$ denotes the set of dual ruled surfaces in $\mathbb D^3_1.$
\end{thm}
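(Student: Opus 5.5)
The argument reduces to unpacking the definitions and using Theorem \ref{thm}. Given $\Gamma(t)=\vec C(t)+\varepsilon^{*}\vec C^{*}(t)$ on $\hat{\mathbb S}^2_1$ (resp.\ $\hat{\mathbb H}^2_0$), I apply the isomorphism $F$ of Theorem \ref{thm} pointwise in $t$. This gives a smooth curve $t\mapsto(\vec C(t),\vec C^{*}(t))$ in $T\mathbb S^2_1$ (resp.\ $T\mathbb H^2$). Concretely, comparing the $1$ and $\varepsilon^{*}$ components of $\langle\Gamma,\Gamma\rangle_L=\sigma$ yields $\langle\vec C,\vec C\rangle_L=\sigma$ and $\langle\vec C,\vec C^{*}\rangle_L=0$ for every $t$.

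From these relations I read off the data of Definition \ref{41}.
\begin{itemize}
\item The ruling $\vec C(t)$ is a unit dual spacelike (resp.\ timelike) vector in $\mathbb D^3_1$. Its real part is non-null, so the lines $\beta+V\vec C$ are non-null.
\item The base curve $\beta=\vec C\times_L\vec C^{*}$ is a well-defined smooth dual curve in $\mathbb D^3_1$. This holds because the Lorentzian dual cross product is $\mathbb D$-bilinear and smooth in the real coordinates $(\vec x_0,\vec x_1,\vec x_2,\vec x_3)$.
\end{itemize}
Hence $\phi(t,V)=\vec C\times_L\vec C^{*}+V\vec C$ is a dual ruled surface in the sense of Definition \ref{41}, and $\Gamma\mapsto\phi$ defines the stated maps into $RS(\mathbb D^3_1)$.

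For uniqueness, I show that $\phi$ is determined canonically by $\Gamma$ and involves no choices.
\begin{itemize}
\item The splitting $\Gamma=\vec C+\varepsilon^{*}\vec C^{*}$ is unique, since every hyper-dual number has a unique representation $A+\varepsilon^{*}B$ with $A,B\in\mathbb D$. By Theorem \ref{thm} the pair $(\vec C,\vec C^{*})$ is therefore uniquely determined.
\item Both $\beta$ and the ruling are built from this pair alone, so $\phi$ depends only on $\Gamma$.
\end{itemize}

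For a geometric check, I would use the identity $\vec C\times_L(\vec C\times_L\vec C^{*})=\pm(\langle\vec C,\vec C^{*}\rangle_L\vec C-\langle\vec C,\vec C\rangle_L\vec C^{*})$, with the sign fixed by the convention for $\times_L$. Together with the tangency condition $\langle\vec C,\vec C^{*}\rangle_L=0$, it gives the following.
\begin{itemize}
\item $\langle\beta,\vec C\rangle_L=0$, so $\beta$ is the foot of the common perpendicular from the origin to each ruling.
\item $\vec C^{*}$ equals $\beta\times_L\vec C$ up to the sign $\sigma$. This matches the moment vector in the E. Study hyper-map, so each ruling is exactly the oriented line that the E. Study hyper-map in Minkowski space assigns to $\Gamma(t)$.
\end{itemize}

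I expect the main obstacle to be pinning down the sign conventions in this double cross product identity for $\times_L$, separately in the spacelike and timelike cases.
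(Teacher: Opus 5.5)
Your argument is correct and matches the paper, which gives no separate proof of Theorem~\ref{42}. The paper reads the statement off directly from Definition~\ref{41}, using the relations $\langle\vec C,\vec C\rangle_L=\sigma$ and $\langle\vec C,\vec C^{*}\rangle_L=0$ obtained by comparing components of $\langle\Gamma,\Gamma\rangle_L=\sigma$, exactly as you do.

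One caution about your optional geometric check, which the theorem does not need. With the paper's $\times_L$ one has $\vec x\times_L(\vec y\times_L\vec z)=\langle\vec x,\vec y\rangle_L\vec z-\langle\vec x,\vec z\rangle_L\vec y$, hence $\vec C^{*}=-\sigma\,\beta\times_L\vec C$. Under the moment convention $M\times_L\vec c$ of Proposition~\ref{412}, the ruling $\beta+V\vec C$ is therefore the E.~Study line of $\Gamma$ only on $\hat{\mathbb H}^2_0$. On $\hat{\mathbb S}^2_1$ it is the line with moment $-\vec C^{*}$, i.e.\ that of $\vec C-\varepsilon^{*}\vec C^{*}$. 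So the word ``exactly'' should be dropped, or the sign tracked case by case.
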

\subsection{Causal classification of ruled surfaces
associated with hyper-dual Lorentzian curves}

We now establish a causal classification of the
dual ruled surfaces generated by hyper-dual
Lorentzian curves. \\
Let $\Gamma(t)=\vec{C}(t)+\varepsilon^{*}\vec{C}^{*}(t)$ be a smooth hyper-dual Lorentzian curve on
$\hat{\mathbb{S}}_{1}^{2}$ or $\hat{\mathbb{H}}_{0}^{2}$, where
\[
\vec{C}(t)=\vec{c}_0(t)+\varepsilon\vec{c}_1(t),
\qquad
\vec{C}^{*}(t)=\vec{c}_2(t)+\varepsilon\vec{c}_3(t).
\]
Let
\[
\sigma=
\begin{cases}
1, & \Gamma(t)\in \hat{\mathbb{S}}_{1}^{2},\\
-1, & \Gamma(t)\in \hat{\mathbb{H}}_{0}^{2}.
\end{cases}
\]
The unit hyper-dual condition
\[
\langle\Gamma(t),\Gamma(t)\rangle_L=\sigma
\]
implies
\[
\langle\vec{C},\vec{C}\rangle_L=\sigma,
\qquad
\langle\vec{C},\vec{C}^{*}\rangle_L=0.
\]
Consequently, we obtain
\[
\begin{aligned}
\langle\vec{c}_0,\vec{c}_0\rangle_L&=\sigma,\\
\langle\vec{c}_0,\vec{c}_1\rangle_L&=0,\\
\langle\vec{c}_0,\vec{c}_2\rangle_L&=0,\\
\langle\vec{c}_0,\vec{c}_3\rangle_L
+\langle\vec{c}_1,\vec{c}_2\rangle_L&=0.
\end{aligned}
\]
By Definition \ref{41}, the associated dual
ruled surface is
\[
\phi(t,V)
=\beta(t)+V\vec{C}(t),
\]
where
$\beta(t)=\vec{C}(t)\times_L\vec{C}^{*}(t)$ and $V=v+\varepsilon v^{*}.$
Hence, we get
\[
\begin{aligned}
\beta_0(t)
&=\vec{c}_0(t)\times_L\vec{c}_2(t),\\
\beta_1(t)
&=\vec{c}_1(t)\times_L\vec{c}_2(t)
+\vec{c}_0(t)\times_L\vec{c}_3(t).
\end{aligned}
\]
Then the non-null ruled surface can be written as follows:
\[
\varphi(t,V)=\varphi_0(t,v)
+\varepsilon\varphi_1(t,v,v^{*}),
\]
where
\[
\begin{aligned}
\varphi_0(t,v)
&=\beta_0(t)+v\vec{c}_0(t),\\
\varphi_1(t,v,v^{*})
&=\beta_1(t)+v\vec{c}_1(t)
+v^{*}\vec{c}_0(t).
\end{aligned}
\]
We consider the dual first fundamental form
\[
\widetilde{I}
=\widetilde{E}\,dt^2
+2\widetilde{F}\,dt\,dV
+\widetilde{G}\,dV^2,
\]
whose coefficients are
\[
\begin{aligned}
\widetilde{E}
&=\langle\varphi_t,\varphi_t\rangle_L,\\
\widetilde{F}
&=\langle\varphi_t,\varphi_V\rangle_L,\\
\widetilde{G}
&=\langle\varphi_V,\varphi_V\rangle_L.
\end{aligned}
\]
The causal type of this dual metric is
defined by the signature of its reduction
modulo $\varepsilon$.
Thus, it is called spacelike when its
real-part metric is positive definite,
and timelike when its real-part metric
has Lorentzian signature.

\begin{prop}
\label{prop:dual_causal_classification}
Let $\Gamma(t)=\vec{C}(t)
+\varepsilon^{*}\vec{C}^{*}(t)$
be a smooth hyper-dual Lorentzian curve
on $\hat{\mathbb{S}}_{1}^{2}$ or $\hat{\mathbb{H}}_{0}^{2}$,
and let $\varphi(t,V)
=\vec{C}(t)\times_L\vec{C}^{*}(t)
+V\vec{C}(t)$
be its associated dual ruled surface. Let
\[
\begin{aligned}
A_0(t,v)
&=\beta_0'(t)+v\vec{c}_0'(t),\\
A_1(t,v,v^{*})
&=\beta_1'(t)+v\vec{c}_1'(t)
+v^{*}\vec{c}_0'(t).
\end{aligned}
\]
Define
\[
\begin{aligned}
\Delta_0(t,v)
={}&\sigma\langle A_0,A_0\rangle_L
-\langle A_0,\vec{c}_0\rangle_L^2
\end{aligned}
\]
and
\[
\begin{aligned}
\Delta_1(t,v,v^{*})
={}&2\sigma\langle A_0,A_1\rangle_L-2\langle A_0,\vec{c}_0\rangle_L
\big(
\langle A_1,\vec{c}_0\rangle_L
+\langle A_0,\vec{c}_1\rangle_L
\big).
\end{aligned}
\]
Then the determinant of the dual first
fundamental form satisfies
\[
\widetilde{E}\widetilde{G}
-\widetilde{F}^{\,2}
=\Delta_0+\varepsilon\Delta_1.
\]
At every point where the associated
real-part surface $\varphi_0$ is a
regular immersion, the following
statements hold:

\begin{enumerate}

\item If
$\Gamma(t)\in\hat{\mathbb{H}}_0^2$,
then
\[
\Delta_0(t,v)<0.
\]
Consequently, the induced dual metric
has timelike causal type.
\item If
$\Gamma(t)\in \hat{\mathbb{S}}_1^2$,
then the induced dual metric has
spacelike causal type if and only if
\[
\Delta_0(t,v)>0.
\]

\item If
$\Gamma(t)\in \hat{\mathbb{S}}_{1}^{2}$,
then the induced dual metric has
timelike causal type if and only if
\[
\Delta_0(t,v)<0.
\]

\item The induced dual metric is
non-degenerate over $D$ if and only if
\[
\Delta_0(t,v)\neq 0.
\]
In particular, if
\[
\Delta_0(t,v)=0,
\]
then the real-part metric is degenerate
and the determinant of the dual metric
is not invertible in $D$.

\end{enumerate}
\end{prop}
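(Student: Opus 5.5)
Our plan is to compute the dual first fundamental form directly from $\varphi=\beta+V\vec C$ and then read off the sign of its real part. First, $\varphi_V=\vec C$, so $\widetilde G=\langle\vec C,\vec C\rangle_L=\sigma$ exactly, with no $\varepsilon$-part, by the unit condition $\langle\vec c_0,\vec c_0\rangle_L=\sigma$, $\langle\vec c_0,\vec c_1\rangle_L=0$. Next, $\varphi_t=\beta'+V\vec C'$. Expanding with $V=v+\varepsilon v^*$, $\beta=\beta_0+\varepsilon\beta_1$ and $\vec C=\vec c_0+\varepsilon\vec c_1$ gives $\varphi_t=A_0+\varepsilon A_1$, with $A_0,A_1$ exactly as in the statement. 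The dual products then give
\[
\widetilde E=\langle A_0,A_0\rangle_L+2\varepsilon\langle A_0,A_1\rangle_L,\qquad
\widetilde F=\langle A_0,\vec c_0\rangle_L+\varepsilon\bigl(\langle A_1,\vec c_0\rangle_L+\langle A_0,\vec c_1\rangle_L\bigr).
\]
Forming $\widetilde E\widetilde G-\widetilde F^2$ and using $\varepsilon^2=0$ yields $\Delta_0+\varepsilon\Delta_1$ immediately. This part is routine.

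For the sign statements we work with the real part. The real-part metric is the first fundamental form of $\varphi_0(t,v)=\beta_0+v\vec c_0$, with coefficients $E_0=\langle A_0,A_0\rangle_L$, $F_0=\langle A_0,\vec c_0\rangle_L$ and $G_0=\sigma$; its determinant is $\Delta_0$. At a regular point the tangent plane is two-dimensional, and its induced metric on a $2$-plane in $\mathbb R^3_1$ is either positive definite (spacelike plane, determinant $>0$), Lorentzian (determinant $<0$), or degenerate (determinant $0$). We take this standard trichotomy from Lorentzian linear algebra, \cite{lopez,neil}. Then:
\begin{itemize}
\item[(1)] If $\sigma=-1$, the plane contains the timelike vector $\vec c_0$. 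It therefore cannot be positive definite or degenerate, so it is Lorentzian and $\Delta_0<0$. Directly, $G_0<0$ already excludes a positive definite form, and a plane containing a timelike vector is nondegenerate of index $1$.
\item[(2),(3)] If $\sigma=1$, $G_0>0$. A symmetric $2\times2$ form with a positive diagonal entry is positive definite iff its determinant is positive, and has signature $(1,1)$ iff its determinant is negative.
\end{itemize}
These statements about causal type follow because causal type is defined through the reduction modulo $\varepsilon$.

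For (4), a dual number $a+\varepsilon b$ is invertible in $D$ iff $a\neq0$. Hence $\widetilde E\widetilde G-\widetilde F^2$ is invertible iff $\Delta_0\neq0$. We interpret nondegeneracy over $D$ as invertibility of the determinant, equivalently of the $2\times2$ dual Gram matrix via the adjugate formula. When $\Delta_0=0$, the real-part Gram matrix is singular, so the real metric is degenerate.

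The main obstacle is not the computation but making precise the step ``regular immersion $\Rightarrow$ the tangent plane is nondegenerate or lightlike.'' One must argue that in case (1) a plane containing a timelike vector is automatically Lorentzian. We would do this by choosing the orthonormal-type basis $\{\vec c_0, w\}$ with $w=A_0+\sigma F_0\vec c_0$ orthogonal to $\vec c_0$. Then $\Delta_0=\sigma\langle w,w\rangle_L$, and since $w\perp\vec c_0$ with $\vec c_0$ timelike, $w$ is spacelike (nonzero by regularity), giving $\Delta_0<0$. The same decomposition $\Delta_0=\sigma\langle w,w\rangle_L$ handles cases (2) and (3) uniformly when $\sigma=1$.
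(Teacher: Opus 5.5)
Your proposal is correct and follows the paper's proof essentially step for step: you compute $\widetilde E,\widetilde F$ and $\widetilde G=\sigma$ in the same way, and you use the same ``a plane containing a timelike vector is Lorentzian'' argument for $\hat{\mathbb H}^2_0$, the same positive-diagonal-entry criterion for $\hat{\mathbb S}^2_1$, and the same fact that $a+\varepsilon b$ is invertible iff $a\neq 0$. Your extra justification via $\Delta_0=\sigma\langle w,w\rangle_L$ is the device the paper uses in Corollary~\ref{cor:nonnull}, but fix the sign: the vector orthogonal to $\vec c_0$ is $w=A_0-\sigma\langle A_0,\vec c_0\rangle_L\vec c_0$, not $A_0+\sigma F_0\vec c_0$ (with your sign $\langle w,\vec c_0\rangle_L=2F_0$ and the identity fails).
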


\begin{proof}
Differentiating the parametrization
with respect to $t$ and the dual
parameter $V$, we obtain
\[
\begin{aligned}
\varphi_t
&=\beta'(t)+V\vec{C}'(t)=A_0+\varepsilon A_1,
\end{aligned}
\]
and
\[
\varphi_V
=\vec{C}(t)
=\vec{c}_0+\varepsilon\vec{c}_1.
\]
Therefore, we write
\[
\begin{aligned}
\widetilde{E}
={}&\langle A_0,A_0\rangle_L
+2\varepsilon
\langle A_0,A_1\rangle_L,
\end{aligned}
\]
\[
\begin{aligned}
\widetilde{F}
={}&\langle A_0,\vec{c}_0\rangle_L+\varepsilon\big(
\langle A_1,\vec{c}_0\rangle_L
+\langle A_0,\vec{c}_1\rangle_L
\big),
\end{aligned}
\]
and
\[
\begin{aligned}
\widetilde{G}
&=\langle\vec{C},\vec{C}\rangle_L=\sigma.
\end{aligned}
\]
Substituting these expressions into
the determinant of the dual metric
gives
\[
\widetilde{E}\widetilde{G}
-\widetilde{F}^{\,2}
=\Delta_0+\varepsilon\Delta_1,
\]
where $\Delta_0$ and $\Delta_1$
are defined as above. \\
Suppose first that
$\Gamma(t)\in \hat{\mathbb{H}}_{0}^{2}$.
Then
\[
\langle\vec{c}_0,\vec{c}_0\rangle_L=-1.
\]
At every regular point, the real
tangent plane is spanned by the
linearly independent vectors
$A_0$ and $\vec{c}_0$.
Since this plane contains a timelike
vector, its induced metric has
Lorentzian signature.
Consequently,
\[
\Delta_0(t,v)<0.
\]
Next, suppose that
$\Gamma(t)\in \hat{\mathbb{S}}_{1}^{2}$.
Then
\[
\langle\vec{c}_0,\vec{c}_0\rangle_L=1.
\]
The matrix of the real-part induced
metric is
\[
g_0=
\begin{pmatrix}
\langle A_0,A_0\rangle_L
&
\langle A_0,\vec{c}_0\rangle_L\\
\langle A_0,\vec{c}_0\rangle_L
&
1
\end{pmatrix}.
\]
Its determinant is
\[
\det(g_0)=\Delta_0(t,v).
\]
Since the lower-right entry is
positive, $g_0$ is positive definite
if and only if
\[
\Delta_0(t,v)>0.
\]
Similarly, $g_0$ has Lorentzian
signature if and only if
\[
\Delta_0(t,v)<0.
\]
Finally, recall that a dual number
$a+\varepsilon b$ is invertible in
$D$ if and only if $a\neq 0$.
Thus,
\[
\widetilde{E}\widetilde{G}
-\widetilde{F}^{\,2}
\]
is invertible precisely when
\[
\Delta_0(t,v)\neq 0.
\]
If $\Delta_0(t,v)=0$, the determinant
reduces to $\varepsilon\Delta_1,$
which is not invertible in $D$. \\
This completes the proof.
\end{proof}
\begin{cor}\label{cor:nonnull}
Under the hypotheses of Theorem~\ref{42}, put
$U_\Gamma=\{(t,v):\Delta_0(t,v)\neq 0\}$. Then:
\begin{enumerate}
\item[(i)] If $\Gamma(t)\in\hat{\mathbb H}^2_0$, then $U_\Gamma$ is exactly the set of
regular points of $\phi_0$, and $\phi$ is timelike on $U_\Gamma$.
\item[(ii)] If $\Gamma(t)\in\hat{\mathbb S}^2_1$, then $\phi$ is non-null on $U_\Gamma$:
it is spacelike where $\Delta_0>0$ and timelike where $\Delta_0<0$.
\end{enumerate}
In particular, $\phi|_{U_\Gamma}$ is a non-null dual ruled surface, so
$\Gamma\mapsto\phi|_{U_\Gamma}$ takes values in the subset $RS^{*}(\mathbb D^3_1) \subset RS(\mathbb D^3_1)$.
\end{cor}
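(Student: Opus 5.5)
The plan is to read the corollary off Proposition~\ref{prop:dual_causal_classification}, after one observation linking $\Delta_0$ to regularity of $\phi_0$. The Gram determinant of $\partial_t\phi_0=A_0$ and $\partial_v\phi_0=\vec c_0$ is
\[
\det\begin{pmatrix}\langle A_0,A_0\rangle_L & \langle A_0,\vec c_0\rangle_L\\ \langle A_0,\vec c_0\rangle_L & \sigma\end{pmatrix}=\Delta_0(t,v).
\]
If $A_0$ and $\vec c_0$ are linearly dependent, then $A_0=\lambda\vec c_0$. In that case $\Delta_0=\sigma\lambda^2\sigma-\lambda^2\sigma^2=0$. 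So $\Delta_0\neq0$ forces $(t,v)$ to be a regular point of $\phi_0$. Hence on $U_\Gamma$ the hypothesis of Proposition~\ref{prop:dual_causal_classification} (regular real-part immersion) always holds, and its items apply pointwise.

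For (i), take $\Gamma\in\hat{\mathbb H}^2_0$. We have just shown $U_\Gamma\subseteq\{\text{regular points}\}$. For the reverse inclusion, item (1) of the proposition gives $\Delta_0<0$ at every regular point, so every regular point lies in $U_\Gamma$. The two sets therefore coincide. The same item gives timelike causal type there.

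For (ii), take $\Gamma\in\hat{\mathbb S}^2_1$ and a point of $U_\Gamma$. Regularity holds by the observation above. Items (2) and (3) then give spacelike type where $\Delta_0>0$ and timelike type where $\Delta_0<0$. Since $U_\Gamma$ excludes $\Delta_0=0$, every point falls into one of these two cases, so the surface is non-null.

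For the final assertion, combine (i) and (ii) with item (4) of the proposition. On $U_\Gamma$ the dual metric determinant $\Delta_0+\varepsilon\Delta_1$ is invertible in $D$, so the induced dual metric is non-degenerate and of definite causal type (spacelike or timelike). This is what membership in $RS^{*}(\mathbb D^3_1)$ means. So $\Gamma\mapsto\phi|_{U_\Gamma}$ lands in that subset, and its well-definedness comes from Theorem~\ref{42}.

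The main obstacle is the regularity step: showing that $\Delta_0\neq0$ implies regularity. It must be handled carefully in the Lorentzian setting, where a degenerate (lightlike) tangent plane also gives $\Delta_0=0$. That case is excluded from $U_\Gamma$ anyway, so I would only use the one direction: dependence implies $\Delta_0=0$. In case (i) the converse direction, regular implies $\Delta_0\neq 0$, is supplied by the timelike-plane argument in item (1).
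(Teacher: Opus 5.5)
Your proposal is correct and follows essentially the same route as the paper: show that $\Delta_0\neq 0$ forces regularity of $\phi_0$, then read (i), (ii) and the final assertion off Proposition~\ref{prop:dual_causal_classification}. The only difference is cosmetic. The paper writes $\Delta_0=\sigma\langle A_0^{\perp},A_0^{\perp}\rangle_L$ with $A_0^{\perp}=A_0-\sigma\langle A_0,\vec c_0\rangle_L\vec c_0$, which gives both directions of the regularity equivalence in (i) at once, since $A_0^{\perp}$ is spacelike or zero when $\sigma=-1$; you instead get one direction from the Gram determinant of a dependent pair and the other from item (1) of the proposition.
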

\begin{proof}
Set $A_0^{\perp}=A_0-\sigma\langle A_0,\vec c_0\rangle_L\vec c_0$. Then
$\langle A_0^{\perp},\vec c_0\rangle_L=0$ and
$\Delta_0=\sigma\langle A_0^{\perp},A_0^{\perp}\rangle_L$. A point is regular for
$\phi_0$ iff $A_0^{\perp}\neq0$, so $\Delta_0\neq0$ implies regularity.
If $\sigma=-1$, the vector $A_0^{\perp}$ is orthogonal to the timelike vector
$\vec c_0$, hence spacelike or zero, so regularity is equivalent to
$\Delta_0<0$. The remaining claims follow from Proposition \ref{prop:dual_causal_classification}.
\end{proof}
\begin{rem}\label{rem:degenerate}
The restriction to $U_\Gamma$ in Corollary \ref{cor:nonnull} cannot be dropped in the case of
$\hat{\mathbb S}^2_1$. Let $\vec e_1,\vec e_2,\vec e_3$ be the standard basis of
$\mathbb R^3_1$ and consider
\[
\vec c_0=\vec e_1,\qquad \vec c_1=\vec 0,\qquad
\vec c_2(t)=t(\vec e_2+\vec e_3),\qquad \vec c_3=\vec 0,
\]
that is,
$\Gamma(t)=\vec e_1+\varepsilon^{*}\,t(\vec e_2+\vec e_3)$.
Since $\langle\vec c_0,\vec c_0\rangle_L=1$,
$\langle\vec c_0,\vec c_1\rangle_L=\langle\vec c_0,\vec c_2\rangle_L=0$ and
$\langle\vec c_0,\vec c_3\rangle_L+\langle\vec c_1,\vec c_2\rangle_L=0$, we have
$\Gamma(t)\in\hat{\mathbb S}^2_1$.
Using $\vec e_1\times_L\vec e_2=-\vec e_3$ and
$\vec e_1\times_L\vec e_3=-\vec e_2$, we obtain
\[
\beta_0(t)=\vec c_0\times_L\vec c_2=-t(\vec e_2+\vec e_3),\qquad
\beta_1(t)=\vec 0 ,
\]
so that
\[
\phi_0(t,v)=-t(\vec e_2+\vec e_3)+v\,\vec e_1,\qquad
\phi_1(t,v,v^{*})=v^{*}\vec e_1 .
\]
Hence $A_0=-(\vec e_2+\vec e_3)$ and $A_1=\vec 0$. The vectors $A_0$ and
$\vec c_0$ are linearly independent, so $\phi_0$ is a regular immersion.
However,
\[
\langle A_0,A_0\rangle_L=0,\qquad \langle A_0,\vec c_0\rangle_L=0,
\]
which gives $\Delta_0\equiv 0$ and $\Delta_1\equiv 0$. Thus
$\tilde E\tilde G-\tilde F^2\equiv 0$, and $\phi_0$ parametrizes the lightlike
plane spanned by the spacelike vector $\vec e_1$ and the null vector
$\vec e_2+\vec e_3$. In this case $U_\Gamma=\emptyset$, and the dual ruled
surface associated with $\Gamma$ is degenerate at every point, although it is regular. This shows that, for curves on $\hat{\mathbb S}^2_1$, regularity alone
does not guarantee that the associated dual ruled surface is non-null.
By contrast, no such example exists on $\hat{\mathbb H}^2_0$: by
Corollary \ref{cor:nonnull} (i), every regular point of $\phi_0$ lies in $U_\Gamma$.
\end{rem}
Now, we will give the geometric interpretations for the developability condition of the dual ruled surfaces:
\begin{prop} \label{38}
The corresponding dual ruled surface of hyper-dual Lorentzian curve $\Gamma(t)=(\vec{C}(t), \vec{C}^{*}(t))$ in Lorentzian hyper-dual sphere (resp., hyperbolic hyper-dual unit sphere) is developable iff
\begin{equation*}
\langle c_{0}^{'}, c_{2}^{'} \rangle_{L}=0 \ \ \text{and} \ \ \langle c_{0}^{'}, c_{3}^{'} \rangle_{L}=-\langle c_{1}^{'}, c_{2}^{'} \rangle_{L}.
\end{equation*}
\end{prop}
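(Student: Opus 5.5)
We apply the developability criterion recalled in Section 2, now over the dual numbers. By Theorem \ref{42} the dual ruled surface is $\phi(t,V)=\beta(t)+V\vec C(t)$ with $\beta=\vec C\times_L\vec C^{*}$. It is developable iff
\[
\det(\beta',\vec C,\vec C')=\langle \beta',\vec C\times_L\vec C'\rangle_L=0
\]
as a dual number. We therefore compute this dual determinant and split it into its real and $\varepsilon$-parts. Each part will turn out to be, up to a unit factor, exactly one of the two stated conditions.

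\textbf{Step 1: express $\beta'$ through the curve.} Differentiating gives $\beta'=\vec C'\times_L\vec C^{*}+\vec C\times_L\vec C^{*\prime}$. The second summand lies in the span of $\vec C$ and $\vec C^{*\prime}$, so its mixed product with $\vec C$ and $\vec C'$ equals $\det(\vec C,\vec C^{*\prime},\vec C\times\ldots)$-type terms. I would evaluate both contributions with the Lagrange identity for the Lorentzian cross product, $\langle a\times_L b,c\times_L d\rangle_L=\pm(\langle a,d\rangle_L\langle b,c\rangle_L-\langle a,c\rangle_L\langle b,d\rangle_L)$, taking care with the sign convention. The inputs are the relations $\langle\vec C,\vec C\rangle_L=\sigma$, $\langle\vec C,\vec C'\rangle_L=0$ and $\langle\vec C,\vec C^{*}\rangle_L=0$, together with the differentiated relation $\langle\vec C',\vec C^{*}\rangle_L=-\langle\vec C,\vec C^{*\prime}\rangle_L$.

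The first contribution is $\langle \vec C'\times_L\vec C^{*},\vec C\times_L\vec C'\rangle_L$. Expanding it, the $\langle \vec C',\vec C\rangle_L$ factors vanish, leaving a multiple of $\langle\vec C',\vec C'\rangle_L\langle\vec C^{*},\vec C\rangle_L=0$. The second contribution reduces to $\pm\sigma\langle\vec C',\vec C^{*\prime}\rangle_L$, up to a term involving $\langle \vec C,\vec C^{*\prime}\rangle_L\langle\vec C,\vec C'\rangle_L=0$. Hence developability is equivalent to the single dual equation
\[
\langle\vec C',\vec C^{*\prime}\rangle_L=0,
\]
since $\sigma=\pm1$ is a unit.

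\textbf{Step 2: split into components.} We write $\vec C'=\vec c_0'+\varepsilon\vec c_1'$ and $\vec C^{*\prime}=\vec c_2'+\varepsilon\vec c_3'$. Using the dual inner product,
\[
\langle\vec C',\vec C^{*\prime}\rangle_L=\langle c_0',c_2'\rangle_L+\varepsilon(\langle c_0',c_3'\rangle_L+\langle c_1',c_2'\rangle_L).
\]
A dual number vanishes iff both its parts do, which gives exactly the two stated conditions. The argument is uniform in $\sigma$, so it covers $\hat{\mathbb S}^2_1$ and $\hat{\mathbb H}^2_0$ simultaneously.

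\textbf{Main obstacle.} The delicate step is Step 1. The Lorentzian Lagrange identity carries sign changes depending on the convention for $\times_L$, and we must confirm that every term except $\sigma\langle\vec C',\vec C^{*\prime}\rangle_L$ genuinely cancels using only the unit-sphere constraints. I would first check this on the degenerate example of Remark \ref{rem:degenerate}. There $\vec C'=0$, and the determinant vanishes trivially, consistent with the conditions.

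A second subtlety is that "developable" should be read as vanishing of the full dual determinant rather than only its real part. I would state this interpretation explicitly at the start so that the $\varepsilon$-part condition is justified.
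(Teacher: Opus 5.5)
Your proposal is correct and follows the paper's route: reduce the dual determinant to a unit multiple of $\langle\vec C',\vec C^{*\prime}\rangle_L$, then split it into real and $\varepsilon$-parts. With the identity $\langle a\times_L b,c\times_L d\rangle_L=\langle a,d\rangle_L\langle b,c\rangle_L-\langle a,c\rangle_L\langle b,d\rangle_L$ (the correct sign for this paper's $\times_L$) you get $\det(\beta',\vec C,\vec C')=-\sigma\langle\vec C',\vec C^{*\prime}\rangle_L$; this matches the paper's $\det(\beta',\vec C',\vec C)=\sigma\langle\vec C',\vec C^{*\prime}\rangle_L$ after a column swap, and your Lagrange-identity computation supplies the justification the paper merely asserts. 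One cosmetic slip: $\vec C\times_L\vec C^{*\prime}$ is orthogonal to $\vec C$ and $\vec C^{*\prime}$ rather than lying in their span, but nothing in your argument depends on that remark.
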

\begin{proof}
Assume that $\phi(t,V)= \vec{C}(t) \times_{L} \vec{C}^{*}(t)+V\vec{C}(t)$ is a corresponding dual ruled surface of hyper-dual Lorentzian curve $\Gamma(t)=(\vec{C}(t), \vec{C}^{*}(t))$ in Lorentzian hyper-dual sphere (resp., hyperbolic hyper-dual unit sphere). Therefore, we have
\begin{equation*}
\det((\vec{C} \times_{L} \vec{C}^{*})^{'}, \vec{C}^{'}, \vec{C})=\sigma \langle \vec{C}^{'}, \vec{C}^{* '} \rangle_{L},
\end{equation*}
where $\sigma=\langle \vec{C}, \vec{C} \rangle_{L} \in \{ -1,1 \} $
Exploiting the developability condition, the last equation gives
\begin{equation*}
\langle c_{0}^{'}, c_{2}^{'} \rangle_{L}+ \varepsilon(\langle c_{0}^{'}, c_{3}^{'} \rangle_{L} + \langle c_{1}^{'}, c_{2}^{'} \rangle_{L})=0 
\end{equation*}
and
\begin{equation*}
\langle c_{0}^{'}, c_{2}^{'} \rangle_{L}=0 \ \ \text{and} \ \  \langle c_{0}^{'}, c_{3}^{'} \rangle_{L} + \langle c_{1}^{'}, c_{2}^{'} \rangle_{L}=0. 
\end{equation*}
This completes the proof.
\end{proof}
\begin{cor}
Let
\[
\Gamma(t)=\vec{C}(t)
+\varepsilon^{*}\vec{C}^{*}(t)
\]
be a smooth hyper-dual Lorentzian curve
on $\hat{\mathbb{S}}_1^2$
(resp., $\hat{\mathbb{H}}_0^2$). Assume that the real part of
$\vec{C}'(t)$ is non-null. Then the associated dual ruled
surface
\[
\varphi(t,V)
=\vec{C}(t)\times_L\vec{C}^{*}(t)
+V\vec{C}(t)
\]
is developable iff
$\Gamma(t)$ and $\vec{C}(t)$
have the same arc-length function,
where the dual-valued arc length
of $\vec{C}$ is naturally embedded
in the hyper-dual numbers.
\end{cor}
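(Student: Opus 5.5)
Our plan is to reduce the statement to Proposition~\ref{38} by comparing the two arc-length elements. The speed of $\Gamma$ is computed with the hyper-dual norm formula of Section~3, and the dual part of $\vec C$ is embedded as a hyper-dual number with vanishing $\varepsilon^{*}$ and $\varepsilon\varepsilon^{*}$ components. First we differentiate $\Gamma=\vec C+\varepsilon^{*}\vec C^{*}$ to get $\Gamma'=\vec C'+\varepsilon^{*}\vec C^{*\prime}$. Then we write
\[
\langle\Gamma',\Gamma'\rangle_L=\langle\vec C',\vec C'\rangle_L+2\varepsilon^{*}\langle\vec C',\vec C^{*\prime}\rangle_L .
\]
In the real components $\vec c_0,\vec c_1,\vec c_2,\vec c_3$ this reads
\[
\langle\Gamma',\Gamma'\rangle_L=\langle c_0',c_0'\rangle_L+2\varepsilon\langle c_0',c_1'\rangle_L+2\varepsilon^{*}\langle c_0',c_2'\rangle_L+2\varepsilon\varepsilon^{*}\bigl(\langle c_0',c_3'\rangle_L+\langle c_1',c_2'\rangle_L\bigr).
\]

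Next we use the hypothesis that the real part $c_0'$ of $\vec C'$ is non-null. This makes the norm formula for $\|\Gamma'\|_L$ applicable, with $\vec x_i$ replaced by $c_i'$. We get $\|\Gamma'\|_L=\|\vec C'\|_L+\varepsilon^{*}N^{*}$, where $\|\vec C'\|_L$ is the dual speed of $\vec C$ and $N^{*}$ is a dual number. The $\varepsilon^{*}$ coefficient of $N^{*}$ is $\operatorname{sgn}\langle c_0',c_0'\rangle_L\,\langle c_0',c_2'\rangle_L/\|c_0'\|_L$. The $\varepsilon\varepsilon^{*}$ coefficient is
\[
\operatorname{sgn}\langle c_0',c_0'\rangle_L\frac{\langle c_0',c_3'\rangle_L+\langle c_1',c_2'\rangle_L}{\|c_0'\|_L}-\frac{\langle c_0',c_1'\rangle_L\langle c_0',c_2'\rangle_L}{\|c_0'\|_L^{3}} .
\]
The two arc lengths are $s_\Gamma(t)=\int_{t_0}^{t}\|\Gamma'\|_L\,du$ and $s_{\vec C}(t)=\int_{t_0}^{t}\|\vec C'\|_L\,du$, with the same base point. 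Hence $s_\Gamma\equiv s_{\vec C}$ if and only if $N^{*}\equiv 0$: one direction by differentiating in $t$, the other by integrating.

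The main step is to show that $N^{*}=0$ is equivalent to the two conditions of Proposition~\ref{38}. Because $\|c_0'\|_L\neq 0$, the vanishing of the $\varepsilon^{*}$ coefficient is equivalent to $\langle c_0',c_2'\rangle_L=0$. Once that holds, the correction term $\langle c_0',c_1'\rangle_L\langle c_0',c_2'\rangle_L/\|c_0'\|_L^{3}$ vanishes. The $\varepsilon\varepsilon^{*}$ coefficient then reduces to a nonzero multiple of $\langle c_0',c_3'\rangle_L+\langle c_1',c_2'\rangle_L$. So $N^{*}=0$ if and only if
\[
\langle c_0',c_2'\rangle_L=0\quad\text{and}\quad\langle c_0',c_3'\rangle_L=-\langle c_1',c_2'\rangle_L ,
\]
which by Proposition~\ref{38} is exactly developability of $\varphi$.

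I expect the main obstacle to be the bookkeeping in the norm formula: the cross term must be seen to drop out once the first condition holds, so that the equivalence is clean in both directions. Some care is also needed with the meaning of "same arc-length function". We read it through the natural embedding $\mathbb D\hookrightarrow D_2$ and a common initial point, so that equality of arc lengths is equivalent to equality of the hyper-dual speed with the embedded dual speed. The non-null hypothesis on $c_0'$ is used only to make the norm well defined and to cancel the factors $\|c_0'\|_L$.
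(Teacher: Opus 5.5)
Your proposal is correct and matches the paper's proof: both compare the hyper-dual speed $\|\Gamma'\|_L$ with the embedded dual speed $\|\vec C'\|_L$, reduce their equality to the conditions of Proposition~\ref{38}, and pass to arc lengths with a common initial point (you also note that differentiating gives the converse). The only difference is bookkeeping: the paper writes the discrepancy as $\varepsilon^{*}\eta\langle\vec C',\vec C^{*\prime}\rangle_L/\|\vec C'\|_L$ and uses that $\|\vec C'\|_L$ is invertible in $\mathbb D$, which is your real-component expansion in compressed form, where the cross term drops out once $\langle c_0',c_2'\rangle_L=0$.
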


\begin{proof}
Since
\[
\Gamma'
=\vec{C}'+\varepsilon^{*}\vec{C}^{*'},
\]
we obtain
\[
\|\Gamma'\|_L
=
\|\vec{C}'\|_L
+
\varepsilon^{*}
\frac{
\eta\langle
\vec{C}',\vec{C}^{*'}
\rangle_L
}{
\|\vec{C}'\|_L
},
\]
where
\[
\eta=
\operatorname{sgn}
\langle\vec{c}_0',\vec{c}_0'\rangle_L.
\]
Since $\|\vec{C}'\|_L$ is
invertible in $\mathbb{D}$,
it follows that
\[
\|\Gamma'\|_L=\|\vec{C}'\|_L
\]
if and only if
\[
\langle
\vec{C}',\vec{C}^{*'}
\rangle_L=0.
\]
By Proposition \ref{38}, this is
equivalent to the developability
of the associated dual ruled
surface. Integrating the equality of
the speeds with the same initial
value gives the equality of
the corresponding arc-length
functions. \\
This completes the proof.
\end{proof}

\begin{cor}
 Let \(\Gamma(t)=(\vec C(t),\vec C^{*}(t))\) be a regular hyper-dual Lorentzian curve on $\hat{\mathbb{S}}_{1}^{2}$ (resp., $\hat{\mathbb{H}}_{0}^{2}$). Then the corresponding dual ruled surface
\[
\phi(t,V)=\vec C(t)\times_L\vec C^{*}(t)+V\vec C(t)
\]
is developable if and only if
\[
\langle\vec C'(t),\vec C^{*'}(t)\rangle_L=0.
\]
Equivalently, the tangent vectors of the two dual Lorentzian curves are orthogonal with respect to the dual Lorentzian inner product.
\end{cor}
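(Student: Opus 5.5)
The plan is to derive this corollary directly from Proposition~\ref{38}, after packaging its two real conditions into a single dual equation. Write $\vec C=\vec c_0+\varepsilon\vec c_1$ and $\vec C^{*}=\vec c_2+\varepsilon\vec c_3$. By the definition of the dual Lorentzian inner product,
\[
\langle\vec C',\vec C^{*'}\rangle_L=\langle\vec c_0',\vec c_2'\rangle_L+\varepsilon\big(\langle\vec c_0',\vec c_3'\rangle_L+\langle\vec c_1',\vec c_2'\rangle_L\big).
\]
A dual number vanishes if and only if both its real part and its $\varepsilon$-part vanish. Hence $\langle\vec C',\vec C^{*'}\rangle_L=0$ is equivalent to the pair of conditions $\langle c_0',c_2'\rangle_L=0$ and $\langle c_0',c_3'\rangle_L=-\langle c_1',c_2'\rangle_L$. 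These are exactly the conditions in Proposition~\ref{38}, which gives the claimed equivalence.

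For robustness, I would also recheck the key identity behind Proposition~\ref{38}:
\[
\det\big((\vec C\times_L\vec C^{*})',\vec C',\vec C\big)=\sigma\langle\vec C',\vec C^{*'}\rangle_L .
\]
To do this, expand $(\vec C\times_L\vec C^{*})'=\vec C'\times_L\vec C^{*}+\vec C\times_L\vec C^{*'}$. The first term contributes $\det(\vec C'\times_L\vec C^{*},\vec C',\vec C)$, a triple product of $\vec C'\times_L\vec C^{*}$ with $\vec C'\times_L\vec C$, up to sign. By the Lorentzian Lagrange identity this reduces to combinations of $\langle\vec C',\vec C\rangle_L$ and $\langle\vec C^{*},\vec C\rangle_L$, and both vanish: the first because $\langle\vec C,\vec C\rangle_L=\sigma$ is constant, the second by the unit hyper-dual condition. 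The second term gives $\langle\vec C\times_L\vec C^{*'},\vec C'\times_L\vec C\rangle_L$ up to sign. Applying the Lagrange identity again produces $\pm\sigma\langle\vec C',\vec C^{*'}\rangle_L$ plus terms that contain $\langle\vec C,\vec C'\rangle_L=0$.

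The main obstacle I expect is keeping track of signs, since the Lorentzian Lagrange identity $\langle a\times_L b,c\times_L d\rangle_L=-\det\begin{pmatrix}\langle a,c\rangle&\langle a,d\rangle\\ \langle b,c\rangle&\langle b,d\rangle\end{pmatrix}$ carries an extra minus sign. However, the overall sign only multiplies the result by the unit $\pm\sigma$, so it does not affect the vanishing condition. The computation is valid over the dual numbers because all the identities used are polynomial.

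Finally, the equivalent geometric formulation is simply a restatement: $\langle\vec C',\vec C^{*'}\rangle_L=0$ says that the tangent vectors of the two dual curves $\vec C$ and $\vec C^{*}$ are orthogonal for the dual Lorentzian inner product. The regularity assumption on $\Gamma$ is needed only so that the ruled surface and its developability are well defined; unlike the previous corollary, no invertibility of $\|\vec C'\|_L$ is required.
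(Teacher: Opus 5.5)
Your proposal is correct and follows the paper's route: the corollary is read off from Proposition~\ref{38}, whose proof rests on the same identity $\det\big((\vec C\times_L\vec C^{*})',\vec C',\vec C\big)=\sigma\langle\vec C',\vec C^{*'}\rangle_L$ and the same splitting of this dual number into its real and $\varepsilon$-parts. Your Lagrange-identity verification of that identity, which the paper only asserts, is also right: the first term vanishes, and the second gives exactly $+\sigma\langle\vec C',\vec C^{*'}\rangle_L$.
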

\begin{cor}
 Let \(\Gamma(t)=(\vec C(t),\vec C^{*}(t))\) be a smooth hyper-dual Lorentzian curve on $\hat{\mathbb{S}}_{1}^{2}$ (resp., $\hat{\mathbb{H}}_{0}^{2}$). Assume that \(\langle\vec C'(t),\vec C'(t)\rangle_L\) is invertible. If
\[
\left\langle
\big(\vec C(t)\times_L\vec C^{*}(t)\big)',
\vec C'(t)
\right\rangle_L=0,
\]
then the striction curve and the base curve of the corresponding dual ruled surface
\[
\phi(t,V)=\vec C(t)\times_L\vec C^{*}(t)+V\vec C(t)
\]
coincide.
\end{cor}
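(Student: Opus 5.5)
The approach is to compute the striction curve of $\phi(t,V)=\beta(t)+V\vec C(t)$, with $\beta=\vec C\times_L\vec C^{*}$, directly in the dual setting and show that the correction term vanishes under the stated hypothesis. For a non-cylindrical ruled surface with ruling $\vec C$ and base curve $\beta$, the striction curve is
\[
\bar\beta(t)=\beta(t)-\frac{\langle\beta'(t),\vec C'(t)\rangle_L}{\langle\vec C'(t),\vec C'(t)\rangle_L}\,\vec C(t).
\]
The first step is to justify this formula over $\mathbb D$. The striction point on each ruling is the value $V=V_0(t)$ at which $\langle\phi_t,\vec C'\rangle_L=0$, that is,
\[
\langle\beta'+V\vec C',\vec C'\rangle_L=0 .
\]
Since $\langle\vec C,\vec C\rangle_L=\sigma$ is constant, we have $\langle\vec C,\vec C'\rangle_L=0$. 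The condition therefore reads
\[
\langle\beta',\vec C'\rangle_L+V\langle\vec C',\vec C'\rangle_L=0 .
\]

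The second step uses the assumption that $\langle\vec C',\vec C'\rangle_L$ is invertible in $D$, which means its real part $\langle\vec c_0',\vec c_0'\rangle_L$ is nonzero. Under this assumption the linear equation in $V$ has the unique dual solution
\[
V_0=-\langle\beta',\vec C'\rangle_L\,\langle\vec C',\vec C'\rangle_L^{-1}.
\]
This shows the striction curve $\bar\beta(t)=\beta(t)+V_0(t)\vec C(t)$ is well defined and unique, and not merely defined up to the nilpotent ambiguity that would arise if only the $\varepsilon$-part were nonzero.

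The final step substitutes the hypothesis $\langle(\vec C\times_L\vec C^{*})',\vec C'\rangle_L=0$, which gives $V_0\equiv 0$ and hence $\bar\beta=\beta=\vec C\times_L\vec C^{*}$. Componentwise, this says $V_0=v_0+\varepsilon v_0^{*}$ with both components zero, so the real-part and dual-part striction curves coincide with $\beta_0$ and $\beta_1$ respectively.

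The main obstacle is the first step: the striction curve has not been defined in the paper for dual ruled surfaces, so I must fix the definition (the dual analogue of the real one) and check that the real-variable derivation carries over. This amounts to expanding $\langle\phi_t,\vec C'\rangle_L$ into its real and $\varepsilon$ parts and verifying that the invertibility hypothesis makes the system for $(v_0,v_0^{*})$ uniquely solvable, with the $\varepsilon$-part equation linear in $v_0^{*}$ and having coefficient $\langle\vec c_0',\vec c_0'\rangle_L\neq 0$.
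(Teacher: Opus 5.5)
Your argument is correct and complete. The paper states this corollary without any proof, and your computation supplies the standard argument the statement implicitly relies on. Since $\langle\vec C,\vec C\rangle_L=\sigma$ is constant, $\langle\vec C,\vec C'\rangle_L=0$. The striction condition $\langle\beta'+V\vec C',\vec C'\rangle_L=0$ then has the unique solution $V_0=-\langle\beta',\vec C'\rangle_L\langle\vec C',\vec C'\rangle_L^{-1}$ in $D$, and this vanishes under the hypothesis.

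Two small points would round it off:
\begin{itemize}
\item Your defining condition agrees with the classical characterization $\langle\bar\beta',\vec C'\rangle_L=0$, because the extra term $V_0'\langle\vec C,\vec C'\rangle_L$ in $\bar\beta'$ drops out.
\item The real part of your equation, $\langle\beta_0',\vec c_0'\rangle_L+v\langle\vec c_0',\vec c_0'\rangle_L=0$, is exactly the striction condition of the real surface $\varphi_0$. So the conclusion also holds if ``striction curve'' is read purely in the real sense.
\end{itemize}
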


\begin{prop}
\label{prop:associated_ruled_surfaces}

Let
\[
\Gamma(t)
=\vec{C}(t)+\varepsilon^{*}\vec{C}^{*}(t)
\]
be a smooth hyper-dual Lorentzian curve
on $\hat{\mathbb{S}}_{1}^{2}$ (resp., $\hat{\mathbb{H}}_{0}^{2}$),
where
$\vec{C}(t)
=\vec{c}_0(t)+\varepsilon\vec{c}_1(t)
$
and
$\vec{C}^{*}(t)
=\vec{c}_2(t)+\varepsilon\vec{c}_3(t).$
Consider the associated dual ruled surface
\[
\varphi(t,V)
=\vec{C}(t)\times_L\vec{C}^{*}(t)
+V\vec{C}(t).
\]
Then its real and dual components
are given by
\[
\varphi(t,V)
=\varphi_0(t,v)
+\varepsilon\varphi_1(t,v,v^{*}),
\]
where
\[
\varphi_0(t,v)
=\beta_0(t)+v\vec{c}_0(t)
\]
and
\[
\varphi_1(t,v,v^{*})
=\beta_1(t)+v\vec{c}_1(t)
+v^{*}\vec{c}_0(t),
\]
with
\[
\beta_0(t)
=\vec{c}_0(t)\times_L\vec{c}_2(t),
\]
\[
\beta_1(t)
=\vec{c}_1(t)\times_L\vec{c}_2(t)
+\vec{c}_0(t)\times_L\vec{c}_3(t).
\]
Moreover, the ruling directions
$\vec{c}_0(t)$ and $\vec{c}_1(t)$
satisfy
\[
\langle
\vec{c}_0(t),\vec{c}_1(t)
\rangle_L=0.
\]
In particular, for each fixed
$k\in\mathbb{R}$, the parametrization
\[
\psi_k(t,v)
=\beta_1(t)+k\vec{c}_0(t)
+v\vec{c}_1(t)
\]
is a ruled parametrization whenever
$\vec{c}_1(t)\neq0$.
If $\langle
\vec{c}_1(t),\vec{c}_1(t)
\rangle_L\neq0,$
then $\psi_k$ has a non-null
ruling direction orthogonal to
that of $\varphi_0$. \\
Whenever both parametrizations
are regular immersions and their
induced metrics are non-degenerate,
they define two non-degenerate
ruled surfaces in $\mathbb{R}_1^3$
with mutually orthogonal
ruling directions.
\end{prop}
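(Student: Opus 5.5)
The plan is to verify each assertion of Proposition~\ref{prop:associated_ruled_surfaces} in turn, using only the component expansions already set up before Proposition~\ref{prop:dual_causal_classification} together with the unit conditions derived there.

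\emph{Decomposition.} I will expand $\vec C\times_L\vec C^{*}$ with $\vec C=\vec c_0+\varepsilon\vec c_1$ and $\vec C^{*}=\vec c_2+\varepsilon\vec c_3$. Using bilinearity and $\varepsilon^2=0$, this gives
\[
\vec c_0\times_L\vec c_2+\varepsilon(\vec c_0\times_L\vec c_3+\vec c_1\times_L\vec c_2),
\]
which identifies $\beta_0$ and $\beta_1$. Next I will multiply $V\vec C=(v+\varepsilon v^{*})(\vec c_0+\varepsilon\vec c_1)=v\vec c_0+\varepsilon(v\vec c_1+v^{*}\vec c_0)$. Collecting real and $\varepsilon$-parts then yields $\varphi_0$ and $\varphi_1$ exactly as stated.

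\emph{Orthogonality.} The relation $\langle\vec c_0,\vec c_1\rangle_L=0$ is the $\varepsilon$-component of $\langle\vec C,\vec C\rangle_L=\sigma$. This was already listed among the consequences of the unit hyper-dual condition, which itself comes from Theorem~\ref{thm}. No new computation is needed.

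\emph{The surfaces $\psi_k$.} Fixing $v^{*}=k$ and renaming $v$ in $\varphi_1$ gives $\psi_k(t,v)=\beta_1(t)+k\vec c_0(t)+v\vec c_1(t)$. This is a base curve $\beta_1+k\vec c_0$ plus a line in the direction $\vec c_1$, so it is a ruled parametrization whenever $\vec c_1(t)\neq 0$. If $\langle\vec c_1,\vec c_1\rangle_L\neq0$, the ruling is non-null. By the previous step it is Lorentz-orthogonal to the ruling $\vec c_0$ of $\varphi_0$. Since $\vec c_0$ is non-null ($\langle\vec c_0,\vec c_0\rangle_L=\sigma$), the two directions are also linearly independent, so the orthogonality is not vacuous.

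\emph{Final claim.} Under the stated regularity and non-degeneracy hypotheses, both $\varphi_0$ and $\psi_k$ are non-degenerate ruled surfaces in $\mathbb R^3_1$. Their rulings $\vec c_0$ and $\vec c_1$ are mutually orthogonal, so the last statement is immediate.

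\emph{Main obstacle.} The only real subtlety is bookkeeping in the cross-product expansion, which is the step I would check most carefully. Order matters there because $\times_L$ is antisymmetric, and the $\varepsilon$-part must read $\vec c_1\times_L\vec c_2+\vec c_0\times_L\vec c_3$ with the factors in that order. Everything else follows directly from the definitions.
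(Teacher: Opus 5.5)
Your proposal is correct and follows the paper's proof step for step. You expand $\varphi$ using $\varepsilon^2=0$ to read off $\beta_0,\beta_1,\varphi_0,\varphi_1$, take $\langle\vec c_0,\vec c_1\rangle_L=0$ from the $\varepsilon$-part of $\langle\vec C,\vec C\rangle_L=\sigma$, and fix $v^{*}=k$ to obtain $\psi_k$; your remark that $\vec c_0$ and $\vec c_1$ are linearly independent (because $\vec c_0$ is non-null and $\vec c_1\neq 0$) is a small addition the paper only makes in the subsequent remark.
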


\begin{proof}
By the definition of the associated
dual ruled surface, we have
\[
\varphi(t,V)
=
\vec{C}(t)\times_L\vec{C}^{*}(t)
+V\vec{C}(t).
\]
Hence, we write
\[
\begin{aligned}
\varphi(t,V)
={}&
(\vec{c}_0+\varepsilon\vec{c}_1)
\times_L
(\vec{c}_2+\varepsilon\vec{c}_3)\\
+&
(v+\varepsilon v^{*})
(\vec{c}_0+\varepsilon\vec{c}_1).
\end{aligned}
\]
Since $\varepsilon^2=0$,
the above expression becomes
\[
\begin{aligned}
\varphi(t,V)
={}&
\vec{c}_0\times_L\vec{c}_2
+v\vec{c}_0\\
&+\varepsilon\Big(
\vec{c}_1\times_L\vec{c}_2
+\vec{c}_0\times_L\vec{c}_3
+v\vec{c}_1+v^{*}\vec{c}_0
\Big).
\end{aligned}
\]
Consequently, we have
\[
\varphi(t,V)
=\varphi_0(t,v)
+\varepsilon\varphi_1(t,v,v^{*}),
\]
where
\[
\varphi_0(t,v)
=\beta_0(t)+v\vec{c}_0(t)
\]
and
\[
\varphi_1(t,v,v^{*})
=\beta_1(t)+v\vec{c}_1(t)
+v^{*}\vec{c}_0(t).
\]
On the other hand, since
$\Gamma(t)$ belongs to
$\hat{\mathbb{S}}_1^2$ or
$\hat{\mathbb{H}}_0^2$, we have
\[
\langle
\Gamma(t),\Gamma(t)
\rangle_L=\sigma,
\qquad
\sigma\in\{1,-1\}.
\]
Therefore, we write
\[
\langle
\vec{C}(t),\vec{C}(t)
\rangle_L=\sigma.
\]
Expanding the inner product
yields
\[
\begin{aligned}
\langle
\vec{C},\vec{C}
\rangle_L
=
\langle\vec{c}_0,\vec{c}_0\rangle_L
+2\varepsilon
\langle\vec{c}_0,\vec{c}_1\rangle_L.
\end{aligned}
\]
Comparing the real and dual
coefficients gives $\langle
\vec{c}_0,\vec{c}_0
\rangle_L=\sigma$
and
$\langle
\vec{c}_0,\vec{c}_1
\rangle_L=0.$
Thus, $\vec{c}_0$ is a non-null
unit vector, and $\vec{c}_1$
is Lorentzian orthogonal to
$\vec{c}_0$.
For any fixed $v^{*}=k$,
the dual component reduces to
\[
\psi_k(t,v)
=
\beta_1(t)+k\vec{c}_0(t)
+v\vec{c}_1(t).
\]
If $\vec{c}_1(t)\neq0$,
this expression has the form
of a ruled parametrization
with ruling direction
$\vec{c}_1(t)$. 
If $\vec{c}_1$ is non-null,
the two ruling directions
$\vec{c}_0$ and $\vec{c}_1$
are both non-null and mutually
orthogonal. \\
Finally, under the stated
regularity and non-degeneracy
assumptions, these parametrizations
define non-degenerate ruled
surfaces in $\mathbb{R}_1^3$. \\
This completes the proof.
\end{proof}

\begin{rem}
The preceding proposition does not imply
that the two associated ruled surfaces
necessarily possess a common base curve.
Indeed, their base curves are given by
\[
\beta_0(t)
=\vec{c}_0(t)\times_L\vec{c}_2(t)
\]
and
\[
\beta_1(t)+k\vec{c}_0(t)
=
\vec{c}_1(t)\times_L\vec{c}_2(t)
+\vec{c}_0(t)\times_L\vec{c}_3(t)
+k\vec{c}_0(t),
\]
which need not coincide. Suppose, in addition, that
$\vec{c}_1(t)$ is non-null and
\[
\beta_1(t)-\beta_0(t)
\in
\operatorname{span}
\{\vec{c}_0(t),\vec{c}_1(t)\}
\]
for every $t$ in the parameter interval. Since
\[
\langle\vec{c}_0,\vec{c}_1\rangle_L=0
\]
and both vectors are non-null,
they are linearly independent. \\
Consequently, there exist unique
smooth functions $f(t)$ and $g(t)$
such that
\[
\beta_1(t)-\beta_0(t)
=
f(t)\vec{c}_0(t)-g(t)\vec{c}_1(t).
\]
These functions are explicitly given by
\[
f(t)
=
\frac{
\langle\beta_1(t)-\beta_0(t),
\vec{c}_0(t)\rangle_L
}{
\langle\vec{c}_0(t),
\vec{c}_0(t)\rangle_L
}
\]
and
\[
g(t)
=
-\frac{
\langle\beta_1(t)-\beta_0(t),
\vec{c}_1(t)\rangle_L
}{
\langle\vec{c}_1(t),
\vec{c}_1(t)\rangle_L
}.
\]
For each fixed $k\in\mathbb{R}$,
define
\[
\begin{aligned}
M_k(t)
&=\beta_0(t)
+\big(f(t)+k\big)\vec{c}_0(t)\\
&=\beta_1(t)
+k\vec{c}_0(t)
+g(t)\vec{c}_1(t).
\end{aligned}
\]
Thus, the two non-null ruled surfaces admit
the representations
\[
\overline{\varphi}_{0,k}(t,v)
=
M_k(t)+v\vec{c}_0(t)
\]
and
\[
\overline{\psi}_k(t,w)
=
M_k(t)+w\vec{c}_1(t).
\]
These parametrizations describe
the original ruled surfaces after
suitable shifts of their ruling
parameters. 
Hence, under the additional
assumption above, the two non-null ruled
surfaces possess a common base
curve $M_k(t)$ and mutually
orthogonal ruling directions. 
If both parametrizations are
regular immersions with
non-degenerate induced metrics,
they define a pair of non-degenerate non-null
ruled surfaces in $\mathbb{R}_1^3$. 
Therefore, the existence of a
common base curve follows from
the additional span condition,
whereas the orthogonality of the
ruling directions follows directly
from the hyper-dual unit condition.
\end{rem}

\begin{prop}
\label{412}

Let $\phi(t,v)=M(t)+v\vec{c}(t)$ and $\phi_1(t,w)=M(t)+w\vec{c}^{*}(t)$
be two smooth ruled surfaces in
$\mathbb{R}_1^3$ with a common
base curve $M(t)$. Assume that their ruling directions
$\vec{c}(t)$ and $\vec{c}^{*}(t)$
are everywhere non-null. Then, after normalizing the
ruling directions, the two
surfaces determine the hyper-dual curves encoding the rulings of the two surfaces, $\Gamma(t)
=\vec{C}(t)+\varepsilon^{*}
\vec{C}^{*}(t)$ and $\Gamma_1(t)
=\vec{C}^{*}(t)+\varepsilon^{*}
\vec{C}(t)$ in $\hat{\mathbb{D}}_2$, where $\vec{C}(t)
=\vec{c}(t)
+\varepsilon
\big(M(t)\times_L\vec{c}(t)\big)$
and
$\vec{C}^{*}(t)
=\vec{c}^{*}(t)
+\varepsilon
\big(M(t)\times_L\vec{c}^{*}(t)\big).$
Here, $\vec{c}$ and $\vec{c}^{*}$
denote the normalized ruling
directions. Furthermore, if
\[
\langle\vec{c}(t),
\vec{c}^{*}(t)\rangle_L=0,
\]
then $\Gamma$ and $\Gamma_1$
belong to the corresponding
unit hyper-dual Lorentzian
or hyperbolic spheres,
according to the causal
characters of their respective
ruling directions.
\end{prop}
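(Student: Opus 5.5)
The plan is to verify the unit conditions directly, reducing everything to the algebraic identity
\[
\langle\vec x,\vec y\times_L\vec z\rangle_L=\det(\vec x,\vec y,\vec z)
\]
for the Lorentzian vector product of Section~2. First I will check this identity from the coordinate formula. Expanding,
\[
x_1(y_2z_3-y_3z_2)+x_2(y_3z_1-y_1z_3)-x_3(y_2z_1-y_1z_2),
\]
and the last term equals $x_3(y_1z_2-y_2z_1)$, so the sum is the ordinary determinant. Consequently $\langle\vec x,\vec y\times_L\vec z\rangle_L$ is an alternating trilinear form. In particular it vanishes whenever two arguments coincide, and it changes sign under any transposition.

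Next comes the normalization. Since $\vec c$ and $\vec c^{*}$ are everywhere non-null, I will replace them by $\vec c/\|\vec c\|_L$ and $\vec c^{*}/\|\vec c^{*}\|_L$. A reparametrization of the ruling parameter ($v\mapsto v\|\vec c\|_L$, and similarly for $w$) shows that the surfaces $\phi$ and $\phi_1$ are unchanged. After this step $\langle\vec c,\vec c\rangle_L=\sigma$ and $\langle\vec c^{*},\vec c^{*}\rangle_L=\sigma^{*}$ with $\sigma,\sigma^{*}\in\{1,-1\}$. I then define $\vec C$ and $\vec C^{*}$ by the Plücker-type formulas of the statement; this already gives the existence of $\Gamma$ and $\Gamma_1$ in $\hat{\mathbb D}_2$. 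Expanding with $\varepsilon^2=0$,
\[
\langle\vec C,\vec C\rangle_L=\langle\vec c,\vec c\rangle_L+2\varepsilon\langle\vec c,M\times_L\vec c\rangle_L=\sigma,
\]
because $\langle\vec c,M\times_L\vec c\rangle_L=\det(\vec c,M,\vec c)=0$. In the same way $\langle\vec C^{*},\vec C^{*}\rangle_L=\sigma^{*}$. So $\vec C$ and $\vec C^{*}$ are unit dual vectors, lying on $\mathbb S^2_1$ or $\mathbb H^2$ according to the causal character of the real part.

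For the second claim, I will use the expansion $\langle\Gamma,\Gamma\rangle_L=\langle\vec C,\vec C\rangle_L+2\varepsilon^{*}\langle\vec C,\vec C^{*}\rangle_L$. This reduces the problem, via the equivalence used in the proof of Theorem~\ref{thm}, to showing $\langle\vec C,\vec C^{*}\rangle_L=0$. Its real part is $\langle\vec c,\vec c^{*}\rangle_L$, which vanishes by hypothesis. Its dual part is
\[
\langle\vec c,M\times_L\vec c^{*}\rangle_L+\langle M\times_L\vec c,\vec c^{*}\rangle_L=\det(\vec c,M,\vec c^{*})+\det(\vec c^{*},M,\vec c),
\]
which is zero by antisymmetry under swapping the first and third arguments. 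Hence $\langle\Gamma,\Gamma\rangle_L=\sigma$, so $\Gamma\in\hat{\mathbb S}^2_1$ or $\hat{\mathbb H}^2_0$ according to whether $\vec c$ is spacelike or timelike. The same computation with the roles of $\vec C$ and $\vec C^{*}$ exchanged gives $\langle\Gamma_1,\Gamma_1\rangle_L=\sigma^{*}$, with $\Gamma_1$ placed according to the causal character of $\vec c^{*}$.

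The only genuinely delicate point is the sign convention of $\times_L$. The paper's definition has an unusual third component, so one must confirm that the triple product is exactly alternating, as done above. Otherwise the dual-part cancellation could fail and yield a spurious extra term.
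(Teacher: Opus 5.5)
Your proposal is correct and follows the same route as the paper. You normalize the rulings, use $\langle\vec c,M\times_L\vec c\rangle_L=0$ to get $\langle\vec C,\vec C\rangle_L=\sigma$, and use the antisymmetry of the Lorentzian triple product to cancel the dual part of $\langle\vec C,\vec C^{*}\rangle_L$, which gives $\langle\Gamma,\Gamma\rangle_L=\sigma$ and $\langle\Gamma_1,\Gamma_1\rangle_L=\sigma^{*}$. The one addition is your coordinate check that $\langle\vec x,\vec y\times_L\vec z\rangle_L=\det(\vec x,\vec y,\vec z)$ for the paper's convention of $\times_L$; the paper only invokes this as the \emph{alternating property}, so your check supplies the missing justification.
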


\begin{proof}
Since the ruling directions
are non-null,
they can be normalized locally
by
\[
\vec{c}\longmapsto
\frac{\vec{c}}
{\sqrt{|\langle\vec{c},
\vec{c}\rangle_L|}}
\]
and
\[
\vec{c}^{*}\longmapsto
\frac{\vec{c}^{*}}
{\sqrt{|\langle\vec{c}^{*},
\vec{c}^{*}\rangle_L|}}.
\]
Such a normalization changes
only the ruling parameters,
not the corresponding
geometric surfaces. Hence, without loss of
generality, we may assume
\[
\langle\vec{c},\vec{c}\rangle_L
=\sigma,
\qquad
\langle\vec{c}^{*},
\vec{c}^{*}\rangle_L
=\tau,
\]
where
\[
\sigma,\tau\in\{1,-1\}.
\]
By the E. Study mapping,
the oriented ruling lines
of the first surface determine
the dual vector
\[
\vec{C}
=\vec{c}
+\varepsilon(M\times_L\vec{c}),
\]
while those of the second
surface determine
\[
\vec{C}^{*}
=\vec{c}^{*}
+\varepsilon(M\times_L\vec{c}^{*}).
\]
Using the Lorentzian
orthogonality of a vector
and its vector product,
we have
\[
\langle
\vec{c},M\times_L\vec{c}
\rangle_L=0
\]
and
\[
\langle
\vec{c}^{*},
M\times_L\vec{c}^{*}
\rangle_L=0.
\]
Consequently, we have
\[
\begin{aligned}
\langle\vec{C},\vec{C}\rangle_L
&=
\langle\vec{c},\vec{c}\rangle_L+
2\varepsilon
\langle\vec{c},
M\times_L\vec{c}\rangle_L=\sigma.
\end{aligned}
\]
Similarly, we write
\[
\langle\vec{C}^{*},
\vec{C}^{*}\rangle_L=\tau.
\]
Thus, $\vec{C}$ and
$\vec{C}^{*}$ are unit
dual Lorentzian vectors. Moreover, the expressions $\Gamma
=\vec{C}+\varepsilon^{*}\vec{C}^{*}$
and $\Gamma_1
=\vec{C}^{*}+\varepsilon^{*}\vec{C}$
define hyper-dual curves
in $\hat{\mathbb{D}}_2$. Now suppose additionally that $\langle\vec{c},
\vec{c}^{*}\rangle_L=0.$
Expanding the dual inner product
gives
\[
\begin{aligned}
\langle\vec{C},
\vec{C}^{*}\rangle_L
={}&
\langle\vec{c},
\vec{c}^{*}\rangle_L+\varepsilon\Big(
\langle\vec{c},
M\times_L\vec{c}^{*}\rangle_L+
\langle M\times_L\vec{c},
\vec{c}^{*}\rangle_L
\Big).
\end{aligned}
\]
By the alternating property
of the Lorentzian scalar
triple product,
\[
\langle\vec{c},
M\times_L\vec{c}^{*}\rangle_L
=
-\langle M\times_L\vec{c},
\vec{c}^{*}\rangle_L.
\]
Therefore, we acquire
\[
\langle\vec{C},
\vec{C}^{*}\rangle_L
=
\langle\vec{c},
\vec{c}^{*}\rangle_L=0.
\]
It follows that
\[
\begin{aligned}
\langle\Gamma,\Gamma\rangle_L
&=
\langle\vec{C},\vec{C}\rangle_L
+2\varepsilon^{*}
\langle\vec{C},
\vec{C}^{*}\rangle_L=\sigma.
\end{aligned}
\]
Similarly, we get
\[
\begin{aligned}
\langle\Gamma_1,
\Gamma_1\rangle_L
&=
\langle\vec{C}^{*},
\vec{C}^{*}\rangle_L
+2\varepsilon^{*}
\langle\vec{C}^{*},
\vec{C}\rangle_L=\tau.
\end{aligned}
\]
Hence, we have
\[
\Gamma\in
\begin{cases}
\hat{\mathbb{S}}_1^2,
&\sigma=1,\\
\hat{\mathbb{H}}_0^2,
&\sigma=-1,
\end{cases}
\]
and
\[
\Gamma_1\in
\begin{cases}
\hat{\mathbb{S}}_1^2,
&\tau=1,\\
\hat{\mathbb{H}}_0^2,
&\tau=-1.
\end{cases}
\]
This completes the proof.
\end{proof}
\begin{rem}
Proposition \ref{412} should not be read as a converse of Theorem \ref{42}. If the curve
$\Gamma=\vec C+\varepsilon^{*}\vec C^{*}$ constructed there is substituted into
Definition \ref{41}, the real base curve of the resulting surface is
$\vec c\times_L\vec c^{*}$, which in general differs from $M$. Thus $\Gamma$ only
encodes the two families of ruling lines via the E.~Study map; it does not
reproduce the given surfaces through the construction of Definition~\ref{41}.
\end{rem}
\begin{prop}
Let $\Gamma(t)=\vec{C}(t)
+\varepsilon^{*}\vec{C}^{*}(t)$
be a smooth hyper-dual Lorentzian curve
on $\hat{\mathbb{S}}_{1}^{2}$
(or $\hat{\mathbb{H}}_{0}^{2}$).
Define $\beta_0(t)
=\vec{c}_{0}(t)\times_L\vec{c}_{2}(t)$
and $\beta_1(t)
=\vec{c}_{1}(t)\times_L\vec{c}_{2}(t)
+\vec{c}_{0}(t)\times_L\vec{c}_{3}(t).$
Assume that $\vec{c}_{1}(t)$
is non-null and $\beta_1(t)-\beta_0(t)
\in
\operatorname{span}
\{\vec{c}_{0}(t),\vec{c}_{1}(t)\}$
for all $t$ in the parameter interval. Then, for every smooth function
$h(t)$, the real and dual components
of the associated non-null ruled
surface determine two
ruled parametrizations in
$\mathbb{R}_{1}^{3}$ having a
common base curve and mutually
orthogonal non-null ruling directions. \\
Wherever both parametrizations are
regular immersions with
non-degenerate induced metrics,
they define a pair of non-degenerate
non-null ruled surfaces.
\end{prop}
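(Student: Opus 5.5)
The proof follows the construction already carried out in the remark after Proposition~\ref{prop:associated_ruled_surfaces}. The only new ingredient is that the constant shift $k$ is replaced by an arbitrary smooth function $h(t)$.

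\textbf{Decomposition and orthogonality.} By Proposition~\ref{prop:associated_ruled_surfaces}, the associated surface splits as $\varphi=\varphi_0+\varepsilon\varphi_1$, with $\varphi_0=\beta_0+v\vec c_0$ and $\varphi_1=\beta_1+v\vec c_1+v^{*}\vec c_0$. The unit hyper-dual condition $\langle\Gamma,\Gamma\rangle_L=\sigma$ gives $\langle\vec c_0,\vec c_0\rangle_L=\sigma\neq0$ and $\langle\vec c_0,\vec c_1\rangle_L=0$. Together with the hypothesis that $\vec c_1$ is non-null, this makes both ruling directions non-null and mutually orthogonal. They are therefore linearly independent: if $a\vec c_0+b\vec c_1=0$, pairing with $\vec c_0$ and with $\vec c_1$ forces $a=b=0$.

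\textbf{Common base curve.} Using the span hypothesis, I write
\[
\beta_1-\beta_0=f\vec c_0-g\vec c_1,
\]
with $f=\sigma\langle\beta_1-\beta_0,\vec c_0\rangle_L$ and $g=-\langle\beta_1-\beta_0,\vec c_1\rangle_L/\langle\vec c_1,\vec c_1\rangle_L$. These are smooth because the denominators never vanish. For an arbitrary smooth $h$ I set
\[
M_h(t)=\beta_0+(f+h)\vec c_0=\beta_1+h\vec c_0+g\vec c_1,
\]
where the two expressions agree by the decomposition above. The real component, reparametrized by $v\mapsto v+f(t)+h(t)$, becomes $M_h+v\vec c_0$. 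The dual component at $v^{*}=h(t)$, reparametrized by $v\mapsto w+g(t)$, becomes $M_h+w\vec c_1$. Both are ruled parametrizations with the common base curve $M_h$, and their rulings $\vec c_0$ and $\vec c_1$ are orthogonal and non-null.

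\textbf{Main obstacle.} Because $h$ depends on $t$, the second surface is no longer a constant-$v^{*}$ slice $\psi_k$. I would stress that $\{\beta_1+h\vec c_0+v\vec c_1\}$ is still the ruled surface traced by the lines through $\beta_1+h\vec c_0$ in direction $\vec c_1$, which is a slice of $\varphi_1$ along the curve $v^{*}=h(t)$. The reparametrizations are shifts of the ruling parameter by smooth functions of $t$, so each is a diffeomorphism of the parameter domain and does not change the image. Consequently regularity and non-degeneracy carry over unchanged to the new parametrizations.

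\textbf{Final clause.} Where both parametrizations are regular immersions, a non-degenerate induced metric means its determinant is nonzero. By Proposition~\ref{prop:dual_causal_classification}, read in the real setting exactly as in Corollary~\ref{cor:nonnull}, this makes each surface spacelike or timelike, hence non-null. This yields the asserted pair of non-degenerate non-null ruled surfaces.
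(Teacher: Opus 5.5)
Your proposal is correct and follows essentially the same route as the paper. The unit condition gives orthogonality of $\vec c_0,\vec c_1$, and the span hypothesis gives the decomposition via Lorentzian pairing. The common base curve $M_h$ is then reached from the real component and from the slice $v^{*}=h(t)$ by shifts of the ruling parameters. The only cosmetic difference is that the paper decomposes $D_h=\beta_1+h\vec c_0-\beta_0$ directly, so its coefficient $f$ equals your $f+h$ and yields the same $M_h$; your explicit reparametrizations and your argument that non-degenerate means spacelike or timelike make the final clause a bit more explicit than the paper's.
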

\begin{proof}
Consider the associated dual ruled surface
\[
\phi(t,V)
=\vec{C}(t)\times_L\vec{C}^{*}(t)
+V\vec{C}(t).
\]
Using $\varepsilon^2=0$, we obtain
\[
\begin{aligned}
\phi(t,V)
={}&
\beta_0(t)+v\vec{c}_{0}(t)\\
+&\varepsilon\Big(
\beta_1(t)+v\vec{c}_{1}(t)
+v^{*}\vec{c}_{0}(t)
\Big).
\end{aligned}
\]
Fixing $v^{*}=h(t)$, the
corresponding real and dual
components are
\[
I(t,v)
=\beta_0(t)+v\vec{c}_{0}(t)
\]
and
\[
II_h(t,v)
=\beta_1(t)+h(t)\vec{c}_{0}(t)
+v\vec{c}_{1}(t).
\]
Since $\Gamma(t)$ is a unit
hyper-dual vector, $\langle\vec{C},\vec{C}\rangle_L
=\sigma$ and $\sigma\in\{1,-1\}.$
Comparing dual coefficients yields $\langle\vec{c}_{0},\vec{c}_{0}\rangle_L
=\sigma$
and $\langle\vec{c}_{0},\vec{c}_{1}\rangle_L
=0.$
Thus, the ruling directions are
Lorentzian orthogonal. Now set
\[
D_h(t)
=\beta_1(t)+h(t)\vec{c}_{0}(t)
-\beta_0(t).
\]
By the additional span condition,
\[
D_h(t)\in
\operatorname{span}
\{\vec{c}_{0}(t),\vec{c}_{1}(t)\}.
\]
Since $\vec{c}_{0}$ and
$\vec{c}_{1}$ are non-null
and orthogonal, there exist
unique smooth functions
$f(t)$ and $g(t)$ such that
\[
D_h(t)
=f(t)\vec{c}_{0}(t)
-g(t)\vec{c}_{1}(t).
\]

These functions are
\[
f(t)=
\frac{
\langle D_h(t),\vec{c}_{0}(t)\rangle_L
}{
\langle\vec{c}_{0}(t),
\vec{c}_{0}(t)\rangle_L
}
\]
and
\[
g(t)=-
\frac{
\langle D_h(t),\vec{c}_{1}(t)\rangle_L
}{
\langle\vec{c}_{1}(t),
\vec{c}_{1}(t)\rangle_L
}.
\]
Consequently,
\[
\begin{aligned}
\beta_0(t)+f(t)\vec{c}_{0}(t)
={}&\beta_1(t)+h(t)\vec{c}_{0}(t)
+g(t)\vec{c}_{1}(t).
\end{aligned}
\]
Define the common base curve by
\[
M_h(t)=\beta_0(t)
+f(t)\vec{c}_{0}(t).
\]
The two associated ruled
parametrizations can therefore
be written as
\[
\bar{\phi}_h(t,v)
=M_h(t)+v\vec{c}_{0}(t)
\]
and
\[
\bar{\phi}_{1,h}(t,w)
=M_h(t)+w\vec{c}_{1}(t).
\]
They are obtained from
$I$ and $II_h$, respectively,
by shifts of the ruling
parameters. Furthermore,
\[
\langle\vec{c}_{0}(t),
\vec{c}_{1}(t)\rangle_L=0.
\]
Hence, the two parametrizations
have a common base curve and
mutually orthogonal non-null
ruling directions. The conclusion concerning
non-degenerate non-null ruled surfaces
follows at points where both
parametrizations are regular
immersions and their induced
metrics are non-degenerate.
\end{proof}

\begin{rem}
The additional assumptions are
essential. In particular, the
hyper-dual unit condition alone
does not guarantee that
$\vec{c}_{1}$ is non-null or
that the two associated non-null ruled
surfaces admit a common base curve. Although the construction allows
an arbitrary smooth function
$h(t)$, different choices of
$h$ need not produce geometrically
distinct non-null ruled surface pairs.
\end{rem}

\begin{example}
Let
\[
M(t)=(0,\sinh t,\cosh t)
\]
be a spacelike curve in
$\mathbb{R}_1^3$. Its unit tangent and unit timelike
principal normal vectors are
\[
\begin{aligned}
\vec{t}(t)&=(0,\cosh t,\sinh t),\\
\vec{n}(t)&=(0,\sinh t,\cosh t).
\end{aligned}
\]

Let $\vec{e}_{1}=(1,0,0)$ and consider
the dual Lorentzian vectors
\[
\begin{aligned}
\vec{C}(t)
&=\vec{n}(t)+\varepsilon\vec{e}_1,\\
\vec{C}^{*}(t)
&=\vec{t}(t).
\end{aligned}
\]
Explicitly, we can write
\[
\begin{aligned}
\vec{C}(t)
&=(\varepsilon,\sinh t,\cosh t),\\
\vec{C}^{*}(t)
&=(0,\cosh t,\sinh t).
\end{aligned}
\]
Define the hyper-dual curve
\[
\Gamma(t)
=
\vec{C}(t)+
\varepsilon^{*}\vec{C}^{*}(t).
\]
We have $\langle\vec{C},\vec{C}\rangle_L=-1$ and $\langle\vec{C}^{*},
\vec{C}^{*}\rangle_L=1.$
Consequently, we have $\Gamma(t)\in
U\hat{\mathbb{H}}_0^2.$
In terms of the component vectors, we write
\[
\begin{aligned}
\vec{c}_0(t)&=\vec{n}(t),\\
\vec{c}_1(t)&=\vec{e}_1,\\
\vec{c}_2(t)&=\vec{t}(t),\\
\vec{c}_3(t)&=\vec{0}.
\end{aligned}
\]
The corresponding base curves
of the real and dual components
are
\[
\begin{aligned}
\beta_0(t)
&=\vec{c}_0(t)\times_L
\vec{c}_2(t)\\
&=\vec{n}(t)\times_L
\vec{t}(t)\\
&=-\vec{e}_1
\end{aligned}
\]
and
\[
\begin{aligned}
\beta_1(t)
&=\vec{c}_1(t)\times_L
\vec{c}_2(t)
+\vec{c}_0(t)\times_L
\vec{c}_3(t)\\
&=\vec{e}_1\times_L
\vec{t}(t)\\
&=-\vec{n}(t).
\end{aligned}
\]
Therefore, we write
\[
\begin{aligned}
\beta_1(t)-\beta_0(t)
&=-\vec{n}(t)+\vec{e}_1\\
&=-\vec{c}_0(t)+\vec{c}_1(t).
\end{aligned}
\]
Hence, we have
\[
\beta_1(t)-\beta_0(t)
\in
\operatorname{span}
\{\vec{c}_0(t),\vec{c}_1(t)\}.
\]
Thus, the additional assumption
of the preceding proposition
is satisfied. Now let $a>1$ be a real constant
and choose the dual parameter $V=v+\varepsilon a.$
The associated dual ruled
parametrization is
\[
\phi(t,V)
=
\vec{C}(t)\times_L
\vec{C}^{*}(t)
+V\vec{C}(t).
\]
Expanding this expression yields
\[
\begin{aligned}
\phi(t,V)
={}&-\vec{e}_1+v\vec{n}(t)\\
&+\varepsilon\Big(
(a-1)\vec{n}(t)
+v\vec{e}_1
\Big).
\end{aligned}
\]
Thus, its real and dual
components are
\[
I(t,v)
=-\vec{e}_1+v\vec{n}(t)
\]
and
\[
II_a(t,v)
=(a-1)\vec{n}(t)
+v\vec{e}_1.
\]
Since
\[
\beta_1-\beta_0
=-\vec{c}_0+\vec{c}_1,
\]
the functions introduced
in the preceding proposition
are
\[
f(t)=a-1,
\qquad
g(t)=-1.
\]
Therefore, the common base curve
is
\[
\begin{aligned}
M_a(t)
&=\beta_0(t)
+f(t)\vec{c}_0(t)\\
&=-\vec{e}_1+(a-1)\vec{n}(t).
\end{aligned}
\]
Explicitly, we have
\[
M_a(t)=
\left(
-1,\,
(a-1)\sinh t,\,
(a-1)\cosh t
\right).
\]
After suitable shifts of the
ruling parameters, the associated
ruled surfaces are
\[
\begin{aligned}
\bar{\phi}_a(t,v)
&=M_a(t)+v\vec{n}(t)\\
&=
\left(
-1,\,
(a-1+v)\sinh t,\,
(a-1+v)\cosh t
\right)
\end{aligned}
\]
and
\[
\begin{aligned}
\bar{\phi}_{1,a}(t,w)
&=M_a(t)+w\vec{e}_1\\
&=
\left(
w-1,\,
(a-1)\sinh t,\,
(a-1)\cosh t
\right).
\end{aligned}
\]
Clearly, both surfaces share
the base curve $M_a(t)$. Their ruling directions
are $\vec{n}(t)$ and
$\vec{e}_1$, respectively. Moreover, $\langle\vec{n}(t),
\vec{e}_1\rangle_L=0.$
Thus, the ruling directions
are mutually orthogonal. We now determine the causal
characters of these surfaces. For $\bar{\phi}_a$, we obtain
\[
\begin{aligned}
(\bar{\phi}_a)_t
&=(a-1+v)\vec{t}(t),\\
(\bar{\phi}_a)_v
&=\vec{n}(t).
\end{aligned}
\]
Hence, the first fundamental
form coefficients are
\[
E=(a-1+v)^2,
\qquad
F=0,
\qquad
G=-1.
\]
Therefore, we compute
\[
EG-F^2=-(a-1+v)^2<0
\]
whenever $v\neq1-a$. Consequently, $\bar{\phi}_a$
is a regular timelike ruled
surface on this domain. For $\bar{\phi}_{1,a}$,
we have
\[
\begin{aligned}
(\bar{\phi}_{1,a})_t
&=(a-1)\vec{t}(t),\\
(\bar{\phi}_{1,a})_w
&=\vec{e}_1.
\end{aligned}
\]
Its first fundamental form
coefficients are
\[
E_1=(a-1)^2,
\qquad
F_1=0,
\qquad
G_1=1.
\]
Thus, we calculate
\[
E_1G_1-F_1^2=(a-1)^2>0.
\]
Since $a>1$,
$\bar{\phi}_{1,a}$ is a
regular spacelike ruled
surface. Finally, for distinct
values of $a>1$, the surfaces
$\bar{\phi}_{1,a}$ satisfy
\[
x_2^2-x_3^2=-(a-1)^2.
\]
Consequently, distinct values
of $a$ determine geometrically
distinct spacelike ruled
surfaces. We therefore obtain infinitely
many pairs of non-degenerate
ruled surfaces in
$\mathbb{R}_1^3$ with a common
base curve and mutually
orthogonal ruling directions.
	\begin{figure}[!htp]
	\centering
	\includegraphics[width=8cm,height=4cm]{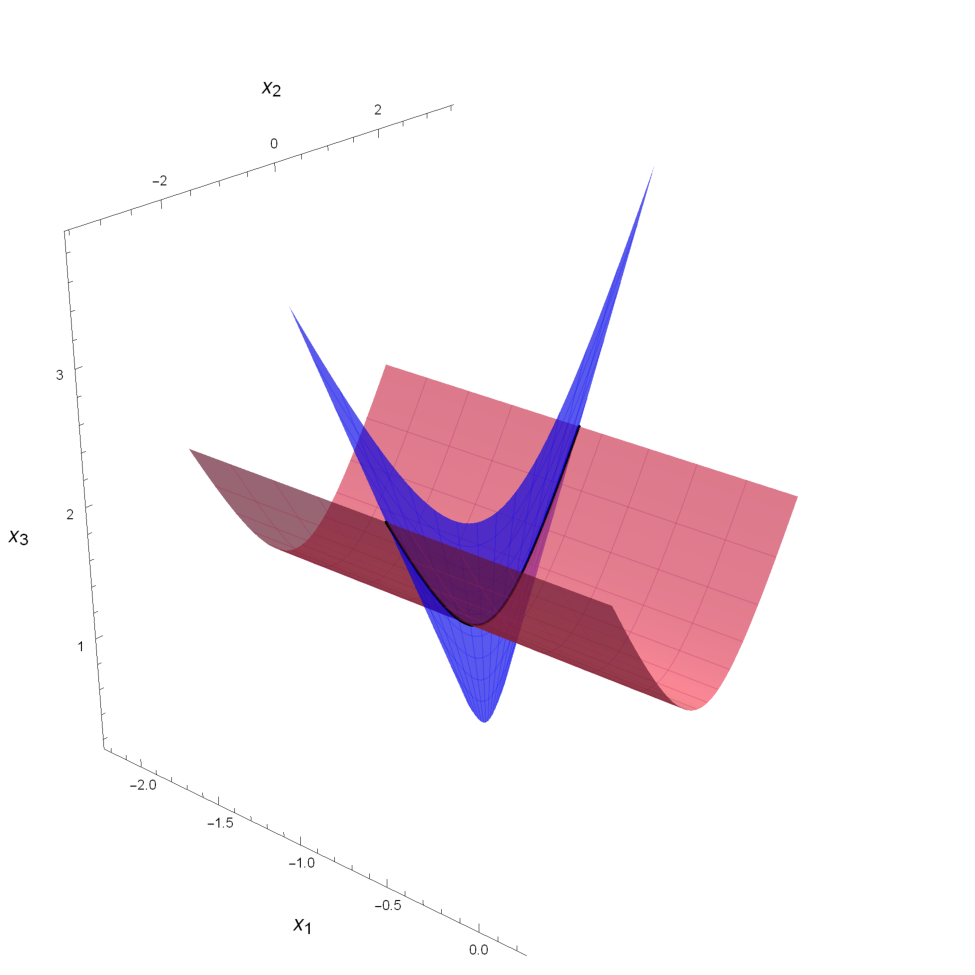}
	\caption{A pair of non-degenerate ruled surfaces in Minkowski 3-space with a common base curve and mutually orthogonal ruling directions. The blue surface is timelike, the pink surface is spacelike, and the black curve represents their common base curve ($M_a(t)$), for ($a=2$).}
	\label{fig: new}
\end{figure}
\end{example}

\begin{example}
Let $\alpha:I\rightarrow\mathbb{R}_1^3$ be
the unit-speed spacelike curve
\[
\alpha(t)=(\cos t,\sin t,0).
\]
Its Frenet vectors are
\[
\begin{aligned}
\vec{t}(t)&=(-\sin t,\cos t,0),\\
\vec{n}(t)&=(-\cos t,-\sin t,0),\\
\vec{b}(t)&=(0,0,-1).
\end{aligned}
\]
Consider the dual Lorentzian vectors
\[
\begin{aligned}
\vec{C}(t)
&=\vec{t}(t)+\varepsilon\vec{b}(t),\\
\vec{C}^{*}(t)
&=\sqrt{2}\vec{n}(t)+\vec{b}(t)+\varepsilon\left(
\vec{t}(t)
+\frac{1}{\sqrt{2}}\vec{n}(t)
+\vec{b}(t)
\right).
\end{aligned}
\]
Define the hyper-dual Lorentzian curve
\[
\Gamma(t)
=
\vec{C}(t)
+\varepsilon^{*}\vec{C}^{*}(t).
\]
First, we verify that
$\Gamma(t)\in U\hat{\mathbb{S}}_1^2$. Indeed,
\[
\langle\vec{C},\vec{C}\rangle_L
=
\langle\vec{t},\vec{t}\rangle_L
+
2\varepsilon
\langle\vec{t},\vec{b}\rangle_L
=1.
\]
Moreover, we have $\langle\vec{C}^{*},
\vec{C}^{*}\rangle_L
=1$ and $\langle\vec{C},\vec{C}^{*}\rangle_L=0.$
Consequently,
\[
\langle\Gamma,\Gamma\rangle_L=1,
\qquad
\langle\vec{C}^{*},
\vec{C}^{*}\rangle_L=1,
\]
and hence
\[
\Gamma(t)\in U\hat{\mathbb{S}}_1^2.
\]
The corresponding component vectors are
\[
\begin{aligned}
\vec{c}_0&=\vec{t},\\
\vec{c}_1&=\vec{b},\\
\vec{c}_2&=\sqrt{2}\vec{n}+\vec{b},\\
\vec{c}_3&=\vec{t}
+\frac{1}{\sqrt{2}}\vec{n}
+\vec{b}.
\end{aligned}
\]
In particular, $\langle\vec{c}_0,\vec{c}_1\rangle_L=0$ and $\langle\vec{c}_1,\vec{c}_1\rangle_L=-1.$
Now consider the associated dual
ruled parametrization
\[
\phi(t,V)
=
\vec{C}(t)\times_L\vec{C}^{*}(t)
+V\vec{C}(t),
\]
Its real and dual base curves are
\[
\beta_0(t)
=
\vec{c}_0(t)\times_L\vec{c}_2(t)
\]
and
\[
\beta_1(t)
=
\vec{c}_1(t)\times_L\vec{c}_2(t)
+
\vec{c}_0(t)\times_L\vec{c}_3(t).
\]
Using the Lorentzian vector product,
we obtain
\[
\vec{t}\times_L\vec{n}=\vec{b},
\qquad
\vec{t}\times_L\vec{b}=\vec{n},
\qquad
\vec{b}\times_L\vec{n}=\vec{t}.
\]
Therefore, we acquire
\[
\begin{aligned}
\beta_0(t)
&=\vec{t}\times_L
(\sqrt{2}\vec{n}+\vec{b})\\
&=\sqrt{2}\vec{b}+\vec{n}
\end{aligned}
\]
and
\[
\begin{aligned}
\beta_1(t)
&=\vec{b}\times_L
(\sqrt{2}\vec{n}+\vec{b})\\
&\quad+
\vec{t}\times_L
\left(
\vec{t}
+\frac{1}{\sqrt{2}}\vec{n}
+\vec{b}
\right)\\
&=\sqrt{2}\vec{t}
+\frac{1}{\sqrt{2}}\vec{b}
+\vec{n}.
\end{aligned}
\]
Consequently, we write
\[
\begin{aligned}
\beta_1(t)-\beta_0(t)
&=\sqrt{2}\vec{t}
-\frac{1}{\sqrt{2}}\vec{b}\\
&=\sqrt{2}\vec{c}_0
-\frac{1}{\sqrt{2}}\vec{c}_1.
\end{aligned}
\]
Thus, we get
\[
\beta_1(t)-\beta_0(t)
\in
\operatorname{span}
\{\vec{c}_0(t),\vec{c}_1(t)\}.
\]
Hence, the additional assumption
of the preceding proposition
is satisfied. Taking $v^{*}=0$, the functions
determining a common base curve are $f(t)=\sqrt{2}$ and $g(t)=\frac{1}{\sqrt{2}}.$
We therefore define
\[
\begin{aligned}
M(t)
&=\beta_0(t)+f(t)\vec{c}_0(t)\\
&=\sqrt{2}\vec{b}(t)
+\vec{n}(t)
+\sqrt{2}\vec{t}(t).
\end{aligned}
\]
Explicitly, we write
\[
M(t)=
\left(
-\sqrt{2}\sin t-\cos t,\,
\sqrt{2}\cos t-\sin t,\,
-\sqrt{2}
\right).
\]
Moreover,
\[
M(t)
=
\beta_1(t)+g(t)\vec{c}_1(t).
\]
Thus, the two associated ruled surfaces
with the common base curve $M(t)$ are
\[
\begin{aligned}
\bar{\phi}(t,v)
&=M(t)+v\vec{t}(t),\\
\bar{\phi}_1(t,w)
&=M(t)+w\vec{b}(t).
\end{aligned}
\]
Their explicit parametrizations are
\[
\begin{aligned}
\bar{\phi}(t,v)
=\big(&
-(\sqrt{2}+v)\sin t-\cos t,\\
&
(\sqrt{2}+v)\cos t-\sin t,\\
&
-\sqrt{2}
\big)
\end{aligned}
\]
and
\[
\begin{aligned}
\bar{\phi}_1(t,w)
=\big(&
-\sqrt{2}\sin t-\cos t,\\
&
\sqrt{2}\cos t-\sin t,\\
&
-\sqrt{2}-w
\big).
\end{aligned}
\]
The ruling directions are
$\vec{t}(t)$ and $\vec{b}(t)$,
respectively. Since
\[
\langle\vec{t}(t),
\vec{b}(t)\rangle_L=0,
\]
the two surfaces have mutually
orthogonal ruling directions. We now examine their causal characters.
For the first surface,
\[
\begin{aligned}
\bar{\phi}_t
&=(\sqrt{2}+v)\vec{n}-\vec{t},\\
\bar{\phi}_v
&=\vec{t}.
\end{aligned}
\]
Therefore, its first fundamental
form coefficients are
\[
E=(\sqrt{2}+v)^2+1,
\qquad
F=-1,
\qquad
G=1.
\]
Consequently, we compute
\[
EG-F^2=(\sqrt{2}+v)^2>0
\]
for $v\neq-\sqrt{2}$. Hence, $\bar{\phi}$ is a regular
spacelike ruled surface on any
domain excluding $v=-\sqrt{2}$. For the second surface,
\[
\begin{aligned}
(\bar{\phi}_1)_t
&=\sqrt{2}\vec{n}-\vec{t},\\
(\bar{\phi}_1)_w
&=\vec{b}.
\end{aligned}
\]
Thus, we compute
\[
E_1=3,
\qquad
F_1=0,
\qquad
G_1=-1,
\]
and
\[
E_1G_1-F_1^2=-3<0.
\]
Therefore, $\bar{\phi}_1$
is a regular timelike ruled surface. Furthermore, the first surface lies
in the spacelike plane
\[
x_3=-\sqrt{2},
\]
whereas the second surface lies
on the timelike circular cylinder
\[
x_1^2+x_2^2=3.
\]
Their common base curve $M(t)$
is contained in the intersection
of these two surfaces. Thus, the hyper-dual curve
$\Gamma(t)\in U\hat{\mathbb{S}}_1^2$
determines a pair of non-degenerate
ruled surfaces in $\mathbb{R}_1^3$
with a common base curve and
mutually orthogonal non-null
ruling directions.

\begin{figure}[!htp]
    \centering
    \includegraphics[width=9cm]{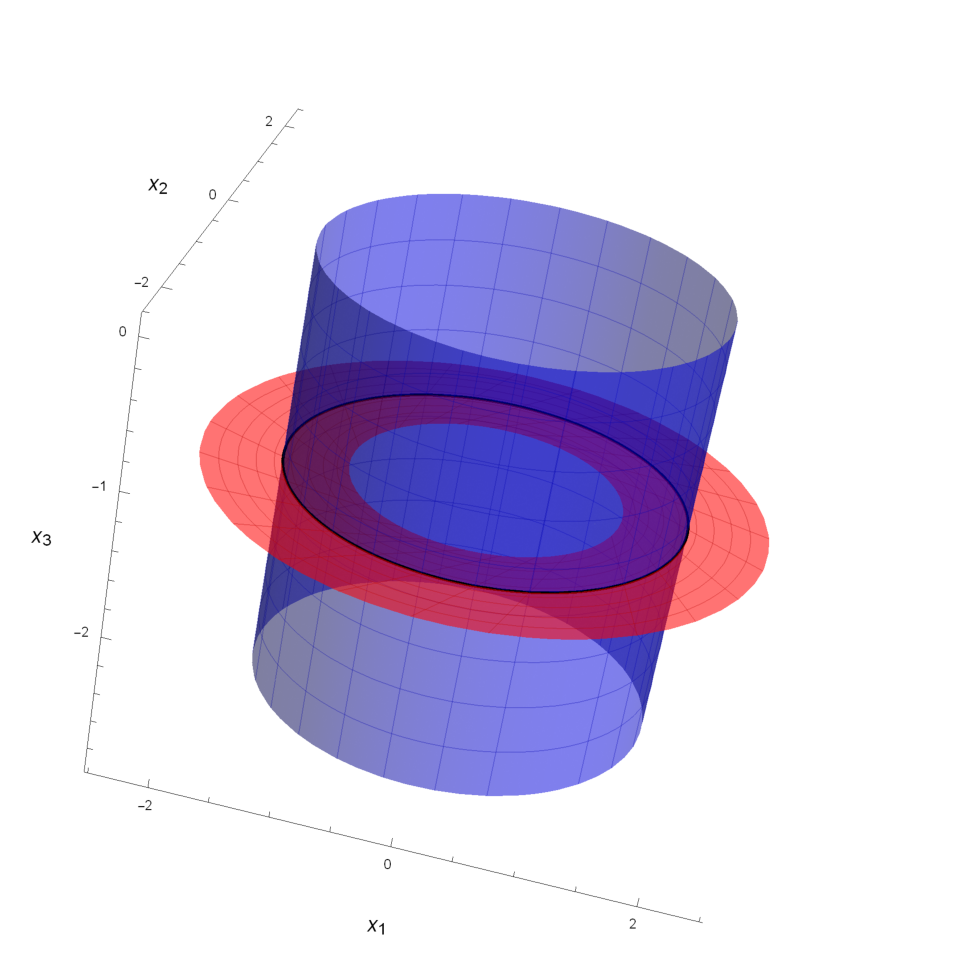}
    \caption{A spacelike ruled surface (red)
    and a timelike ruled surface (blue)
    in Minkowski $3$-space with a common
    base curve (black) and mutually
    orthogonal ruling directions.}
    \label{fig:new1}
\end{figure}

\end{example}
\section{Conclusion}

In this paper, a geometric framework relating pseudo hyper-dual tangent sphere bundles to the theory of non-null ruled surfaces is established in the hyper-dual Lorentzian setting. Extending the classical E. Study correspondence to unit hyper-dual vectors, we obtain a natural representation of non-null ruled surfaces generated by hyper-dual Lorentzian curves. This correspondence provides a geometric interpretation of higher-order dual structures in terms of ruled surface theory and unifies the underlying relationships between hyper-dual curves and tangent sphere bundles.

Within this framework, non-null ruled surfaces are systematically classified. Moreover, necessary and sufficient developability conditions are derived and interpreted geometrically through hyper-dual Lorentzian curves and their arc-length parameters. Additional structural properties, common base curves, and orthogonal ruling directions, are established. The theoretical results are supported by explicit examples and graphical illustrations, confirming the validity of the proposed constructions.

The results presented here extend the scope of the classical E. Study map from the dual Lorentzian setting to pseudo hyper-dual tangent sphere bundles and provide a unified approach to the geometric analysis of non-null ruled surfaces associated with higher-order dual structures. Beyond their theoretical significance, these results offer a foundation for further investigations on hyper-dual differential geometry, tangent bundle theory, and the geometry of ruled surfaces in semi-Riemannian spaces. Future research may consider analogous constructions for null ruled surfaces, higher-dimensional semi-Riemannian manifolds, and alternative generalized dual algebraic structures.
\section*{}

\end{document}